\newtheorem{thm}{Theorem}[section]
 \newtheorem{lem}[thm]{Lemma}
 \newtheorem{prop}[thm]{Proposition}
 \newtheorem{defn}[thm]{Definition}
 \newtheorem{rem}[thm]{Remark}
\DeclareMathOperator\supp{supp}
\DeclareMathOperator\meas{meas}
\DeclareMathOperator\Fuj{Fuj}
\DeclareMathOperator\loc{loc}
\begin{document}

\begin{frontmatter}



\title{A note on a conjecture for the critical curve of a weakly coupled system of semilinear wave equations with scale-invariant lower order terms}


\author[Frei]{Alessandro Palmieri}
\ead{alessandro.palmieri.math@gmail.com}

\address[Frei]{Institute of Applied Analysis, Faculty for Mathematics and Computer Science, Technical University Bergakademie Freiberg, Pr\"{u}ferstra{\ss}e 9, 09596, Freiberg, Germany}

\begin{abstract}

In this note two blow-up results are proved for a weakly coupled system of semilinear wave equations with distinct scale-invariant lower order terms both in the subcritical case and in the critical case, when the damping and the mass terms make both equations in some sense ``wave-like''. In the proof of the subcritical case an iteration argument is used. This approach is based on a coupled system of nonlinear ordinary integral inequalities and lower bound estimates for the spatial integral of the nonlinearities. In the critical case we employ a test function type method, that has been developed recently by Ikeda-Sobajima-Wakasa and relies strongly on a family of certain self-similar solutions of the adjoint linear equation. Therefore, as critical curve in the $p$ - $q$ plane of the exponents of the power nonlinearities for this weakly coupled system we conjecture a shift of the critical curve for the corresponding weakly coupled system of semilinear wave equations.

\end{abstract}

\begin{keyword}
Semilinear weakly coupled system; Blow-up; Scale-invariant lower order terms; Critical curve; Self-similar solutions; Test function method.


\MSC[2010] Primary 35L71 \sep 35B44; Secondary 33C90 \sep 35C06 \sep  35G50 \sep 35G55


\end{keyword}

\end{frontmatter}


\section{Introduction}

In this paper we consider the weakly coupled system of wave equations with scale-invariant damping and mass terms with different multiplicative constants in the lower order terms and with power nonlinearities, namely,
\begin{align}\label{weakly coupled system}
\begin{cases}
u_{tt}-\Delta u +\frac{\mu_1}{1+t}u_t +\frac{\nu_1^2}{(1+t)^2}u = |v|^p,  & x\in \mathbb{R}^n, \ t>0,  \\
v_{tt}-\Delta v +\frac{\mu_2}{1+t}v_t +\frac{\nu_2^2}{(1+t)^2}v = |u|^q,  & x\in \mathbb{R}^n, \ t>0, \\
 (u,u_t,v,v_t)(0,x)= (\varepsilon u_0, \varepsilon u_1, \varepsilon v_0, \varepsilon v_1)(x) & x\in \mathbb{R}^n,
\end{cases}
\end{align}
where $\mu_1,\mu_2,\nu_1^2,\nu_2^2$ are nonnegative constants, $\varepsilon$ is a positive parameter describing the size of initial data and $p,q>1$. 

Recently, the Cauchy problem for a semilinear wave equation with scale-invariant damping and mass
\begin{align}\label{scale inv eq}
\begin{cases}
u_{tt}-\Delta u +\frac{\mu}{1+t}u_t +\frac{\nu^2}{(1+t)^2}u = |u|^p,  & x\in \mathbb{R}^n, \ t>0,  \\
(u,u_t)(0,x)= (u_0,u_1)(x),& x\in \mathbb{R}^n, 
\end{cases}
\end{align} where $\mu,\nu^2$ are nonnegative constants and $p>1$, has attracted a lot of attention. 
The value of $\delta \doteq (\mu -1) ^2-4\nu^2$ has a strong influence on some properties of solutions to \eqref{scale inv eq} and to the corresponding homogeneous linear equation. According to \cite{Abb15,Wakasugi14,DLR15,DabbLuc15,Wakasa16,NPR16,PalRei17,Pal17,LTW17,IS17,PR17vs,TL1709,TL1711,Pal18odd,Pal18even,DabbPal18,PT18,KatoSak18,Lai18} for $\delta\geqslant 0$ the model in \eqref{scale inv eq} is somehow an intermediate model between the semilinear free wave equation and the semilinear classical damped equation, whose critical exponent is $p_{\text{Fuj}}(n+\frac{\mu-1}{2}-\frac{\sqrt{\delta}}{2})$ for $\delta\geq (n+1)^2$ and seems reasonably to be $p_0(n+\mu)$ for small and nonnegative values of delta, where $p_{\Fuj}(n)$ and $p_0(n)$ denote the Fujita exponent and the Strauss exponent, respectively.

As for the single semilinear wave equation with scale-invariant damping and mass term, the quantities
\begin{align}\label{def deltas}
\delta_j \doteq (\mu_j-1)^2 -4\nu_j^2, \qquad j=1,2,
\end{align}  
play a fundamental role in the description of some of the properties of the solutions to \eqref{weakly coupled system} as, for example, the critical curve. In particular, in \cite{ChenPal18} the critical curve for \eqref{weakly coupled system} is proved to be 
\begin{align}\label{critical exponent parablic like case system}
\max\bigg\{\frac{p+1}{pq-1}-\frac{1}{2}\bigg(\frac{\mu_1-1}{2}-\frac{\sqrt{\delta_1}}{2}\bigg),\frac{q+1}{pq-1}-\frac{1}{2}\bigg(\frac{\mu_2-1}{2}-\frac{\sqrt{\delta_2}}{2}\bigg)\bigg\}=\frac{n}{2}
\end{align} in the case $\delta_1,\delta_2\geqslant (n+1)^2$. Let us remark that \eqref{critical exponent parablic like case system} is a shift of the critical curve in the $p$ - $q$ plane for the weakly coupled system of semilinear classical damped equation with power nonlinearities, which is (cf. \cite{SunWang2007,Narazaki2009,Nishi12,NishiharaWakasugi,NishiWak15}) $$\frac{\max\{p,q\}+1}{pq-1}=\frac{n}{2}.$$

This paper is devoted to the proof of a blow-up results for \eqref{weakly coupled system} in the case $\delta_1,\delta_2\geqslant 0$ both in the subcritical case and on the critical curve. Analogously to what happens in the case of single equations, when $\delta_1,\delta_2$ are small the model is somehow ``wave-like''. Therefore, the blow-up result that we will prove may be optimal only for small values of $\delta_1,\delta_2$ according to the above mentioned papers, where \eqref{scale inv eq} is considered. This is reasonable since we obtain as ``critical curve''
\begin{align*}
\max\left\{\frac{p+2+q^{-1}}{pq-1}-\frac{\mu_1}{2},\frac{q+2+p^{-1}}{pq-1}-\frac{\mu_2}{2}\right\}=\frac{n-1}{2}
\end{align*} which is a generally asymmetric shift of the critical curve for the weakly coupled system of semilinear wave equation with power nonlinearities (see also \cite{DelS97,DGM,DM,AKT00,KT03,Kur05,GTZ06,KTW12}), namely,
\begin{align} \label{crit curv weakly coupled system wave}
\max\left\{\frac{p+2+q^{-1}}{pq-1},\frac{q+2+p^{-1}}{pq-1}\right\}=\frac{n-1}{2}.
\end{align}

Before stating the main results of this paper, let us introduce a suitable notion of energy solutions according to \cite{LTW17}.

\begin{defn} \label{def energ sol intro} Let $u_0,v_0\in H^1(\mathbb{R}^n)$ and $u_1,v_1\in L^2(\mathbb{R}^n)$.
We say that $(u,v)$ is an energy solution of \eqref{weakly coupled system} on $[0,T)$ if
\begin{align*}
& u\in \mathcal{C}([0,T),H^1(\mathbb{R}^n))\cap \mathcal{C}^1([0,T),L^2(\mathbb{R}^n))\cap L^q_{loc}(\mathbb{R}^n\times[0,T)), \\
& v\in \mathcal{C}([0,T),H^1(\mathbb{R}^n))\cap \mathcal{C}^1([0,T),L^2(\mathbb{R}^n))\cap L^p_{loc}(\mathbb{R}^n\times[0,T))
\end{align*}
satisfy $u(0,x)=\varepsilon u_0(x)$ and $v(0,x)=\varepsilon v_0(x)$ in $H^1(\mathbb{R}^n)$ and the equalities
\begin{align} 
&\int_{\mathbb{R}^n}u_t(t,x)\phi(t,x)\,dx-\int_{\mathbb{R}^n}u_t(0,x)\phi(0,x)\,dx - \int_0^t\int_{\mathbb{R}^n} u_t(s,x)\phi_t(s,x) \,dx\, ds\notag \\
& \ +\int_0^t\int_{\mathbb{R}^n}\nabla u(s,x)\cdot\nabla\phi(s,x)\, dx\, ds+\int_0^t\int_{\mathbb{R}^n}\left(\frac{\mu_1 }{1+s} u_t(s,x)+\frac{\nu_1^2}{(1+s)^2}u(s,x)\right)\phi(s,x)\,dx\, ds  \notag \\
& =\int_0^t \int_{\mathbb{R}^n}|v(s,x)|^p\phi(s,x)\,dx \, ds  \label{def u}
\end{align} and 
\begin{align} 
&\int_{\mathbb{R}^n}v_t(t,x)\psi(t,x)\,dx-\int_{\mathbb{R}^n}v_t(0,x)\psi(0,x)\,dx - \int_0^t\int_{\mathbb{R}^n} v_t(s,x)\psi_t(s,x) \,dx\, ds\notag \\
& \ +\int_0^t\int_{\mathbb{R}^n}\nabla v(s,x)\cdot\nabla\psi(s,x)\, dx\, ds+\int_0^t\int_{\mathbb{R}^n}\left(\frac{\mu_2 }{1+s} v_t(s,x)+\frac{\nu_2^2}{(1+s)^2}v(s,x)\right)\psi(s,x)\,dx\, ds  \notag \\
& =\int_0^t \int_{\mathbb{R}^n}|u(s,x)|^q\psi(s,x)\,dx \, ds  \label{def v}
\end{align}
for any $\phi,\psi \in \mathcal{C}_0^\infty([0,T)\times\mathbb{R}^n)$ and any $t\in [0,T)$.
\end{defn}
 After a further integration by parts in \eqref{def u} and \eqref{def v}, letting $t\rightarrow T$, we find that $(u,v)$ fulfills the definition of weak solution to \eqref{weakly coupled system}.
 
We can now state the main theorem in the subcritical case.

\begin{thm}\label{Thm blowup iteration} Let $\mu_1,\mu_2,\nu_1^2,\nu_2^2$ be nonnegative constants such that $\delta_1, \delta_2 \geqslant 0$. Let us consider $p,q>1$ satisfying 
\begin{align}\label{critical exponent wave like case system}
\max\left\{\frac{p+2+q^{-1}}{pq-1}-\frac{\mu_1}{2},\frac{q+2+p^{-1}}{pq-1}-\frac{\mu_2}{2}\right\}>\frac{n-1}{2}\,.
\end{align} 

Assume that $u_0,v_0\in H^1(\mathbb{R}^n)$ and $ u_1,v_1\in  L^2(\mathbb{R}^n)$ are compactly supported in $B_R\doteq \{x\in \mathbb{R}^n: |x|\leqslant R\}$ and satisfy
\begin{align}
u_0(x)\geqslant 0  \ \ \mbox{and} \ \ u_1(x)+\tfrac{\mu_1-1-\sqrt{\delta_1}}{2}u_0(x)\geqslant 0 , \label{assumptions u0, u1} \\
v_0(x)\geqslant 0  \ \ \mbox{and} \ \ v_1(x)+\tfrac{\mu_2-1-\sqrt{\delta_2}}{2}v_0(x)\geqslant 0 . \label{assumptions v0, v1}
\end{align}

Let $(u,v)$ be an energy solution of \eqref{weakly coupled system} with lifespan $T=T(\varepsilon)$ according to Definition \ref{def energ sol intro}. Then, there exists a positive constant $\varepsilon_0=\varepsilon_0(u_0,u_1,v_0,v_1,n,p,q,\mu_1,\mu_2,\nu_1^2,\nu_2^2,R)$ such that for any $\varepsilon\in (0,\varepsilon_0]$ the solution $(u,v)$ blows up in finite time. Moreover,
 the upper bound estimate for the lifespan
\begin{align} \label{lifespan upper bound estimate}
T(\varepsilon)\leqslant C \varepsilon ^{-\max\{F(n+\mu_1,p,q),F(n+\mu_2,q,p)\}^{-1}}
\end{align} holds, where C is an independent of $\varepsilon$, positive constant and 
\begin{align}\label{def F(n,p,q) function}
F(n,p,q)\doteq \frac{p+2+q^{-1}}{pq-1}-\frac{n-1}{2}.
\end{align}
\end{thm}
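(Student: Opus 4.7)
The strategy is a slice-by-slice iteration of coupled ordinary integral inequalities, in the spirit of the John--Sideris approach and its adaptation to the scale-invariant setting, now extended to the weakly coupled system. I would begin by introducing the spatial averages
\[ U(t) \doteq \int_{\mathbb{R}^n} u(t,x)\,dx, \qquad V(t) \doteq \int_{\mathbb{R}^n} v(t,x)\,dx, \]
which by finite speed of propagation are in fact integrals over $B_{R+t}$. Testing \eqref{def u} and \eqref{def v} against a smooth cutoff equal to $1$ on that ball reduces the PDE system to the coupled Cauchy--Euler system
\[ U''(t) + \tfrac{\mu_1}{1+t}U'(t) + \tfrac{\nu_1^2}{(1+t)^2}U(t) = \int_{\mathbb{R}^n}|v(t,x)|^p\,dx, \]
together with its $V$-analogue. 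Since $\delta_j\geqslant 0$ the characteristic roots $\beta_{j,\pm}=(\mu_j-1\pm\sqrt{\delta_j})/2$ are real; the substitution $\tilde U(t) = (1+t)^{\beta_{1,-}}U(t)$ kills the zero-order term and reduces the $U$-ODE to a first-order equation for $(1+t)^{1+\sqrt{\delta_1}}\tilde U'(t)$. The assumption in \eqref{assumptions u0, u1} is precisely what guarantees $\tilde U'(0)\geqslant 0$, so a single integration yields $\tilde U'(t)\geqslant 0$ and hence the baseline lower bound $U(t)\geqslant \varepsilon (1+t)^{-\beta_{1,-}}\int_{\mathbb{R}^n}u_0\,dx$; the symmetric argument using \eqref{assumptions v0, v1} provides the $V$-baseline, and nonnegativity of $U$ and $V$.

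Next, Hölder's inequality on the support ball produces the couplings
\[ \int_{\mathbb{R}^n}|v|^p\,dx\geqslant c\,(1+t)^{-n(p-1)}V(t)^p, \qquad \int_{\mathbb{R}^n}|u|^q\,dx\geqslant c\,(1+t)^{-n(q-1)}U(t)^q, \]
which, when combined with the Duhamel-type representations of $\tilde U$ and $\tilde V$ from the previous step, yield the closed pair of nonlinear integral inequalities
\[ U(t)\geqslant c\,(1+t)^{-\beta_{1,-}}\int_0^t(1+s)^{-1-\sqrt{\delta_1}}\!\!\int_0^s(1+r)^{\frac{\mu_1+1+\sqrt{\delta_1}}{2}-n(p-1)}V(r)^p\,dr\,ds, \]
and its $V$-symmetric counterpart with exponents governed by $\mu_2$, $\delta_2$, $q$. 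These are the coupled ordinary integral inequalities advertised in the abstract.

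I would then iterate: plugging the $V$-baseline into the $U$-inequality, and vice versa, and alternating, produces lower bounds of the form $U(t)\geqslant D_k\,(t-R)^{a_k}(1+t)^{-b_k}$ with affine-geometric recursions $a_{k+1}=pq\,a_k+A$, $b_{k+1}=pq\,b_k+B$, $\log D_{k+1}=pq\log D_k+\log c_k$, and analogous ones for $V$ (with $p,q$ swapped and $\mu_1\leftrightarrow\mu_2$). Solving these recursions explicitly and sending $k\to\infty$ at a fixed, sufficiently large $t$, the bound on $U(t)$ forces a contradiction exactly when the asymptotic coefficient of $\log(t-R)$ beats that of $\log(1+t)$; elementary algebra shows that this sign condition is equivalent to $F(n+\mu_1,p,q)>0$ in the notation \eqref{def F(n,p,q) function}, while the symmetric iteration starting from $V$ gives $F(n+\mu_2,q,p)>0$. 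Taking the better of the two paths recovers \eqref{critical exponent wave like case system}, and tracing the $\varepsilon$-dependence---which enters only through $D_1\sim\varepsilon$ and is amplified $(pq)^k$-fold at step $k$---yields the lifespan upper bound \eqref{lifespan upper bound estimate}. The main obstacle will be the precise bookkeeping of the exponents so that they assemble into the particular combination $\frac{p+2+q^{-1}}{pq-1}-\frac{n+\mu_1-1}{2}$ appearing in $F$; in particular, the $\frac{\mu_1}{2}$ shift with respect to the pure-wave critical curve \eqref{crit curv weakly coupled system wave} must emerge correctly from the polynomial weight $(\mu_1+1+\sqrt{\delta_1})/2$ inside the double integral, and the sharpness of the iteration requires choosing the starting ``delay'' carefully so that integrability is preserved at each step.
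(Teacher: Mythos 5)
Your overall architecture (spatial averages $U,V$, the Cauchy--Euler ODEs, the double-integral iteration frame obtained from the real roots $r_{1,2},\rho_{1,2}$ of the characteristic polynomials, and the geometric recursion for the exponents) matches the paper's Section \ref{Section proof main thm} closely. However, there is a genuine gap in the choice of the \emph{starting point} of the iteration, and it is fatal to reaching the stated curve \eqref{critical exponent wave like case system}. You seed the iteration with the baseline $U(t)\geqslant \varepsilon (1+t)^{-\frac{\mu_1-1-\sqrt{\delta_1}}{2}}\int u_0\,dx$ (and its $V$-analogue), obtained by integrating the ODE twice, and then apply H\"older on the whole ball $B_{R+t}$. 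This is the Kato-type seed: feeding $V(s)\gtrsim\varepsilon(1+s)^{-\rho_1}$ into the frame \eqref{iter2} produces a first-step lower bound whose net power of $t$ is $n+2-p\bigl(n+\tfrac{\mu_2-1-\sqrt{\delta_2}}{2}\bigr)$, whereas the paper's first step yields $b_1-a_1=n+1-\tfrac{n+\mu_2-1}{2}p$. Since the increment $\frac{B-A}{pq-1}$ added by the recursion is independent of the seed, the final blow-up condition is determined by $b_1-a_1$, and with your seed the condition acquires an extra $-\tfrac{p}{2}(n-\sqrt{\delta_2})+1$ and an explicit $\sqrt{\delta_2}$-dependence. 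For generic parameters this is strictly more restrictive than $F(n+\mu_2,q,p)>0$ (note also that the claimed curve \eqref{critical exponent wave like case system} contains no $\sqrt{\delta_j}$ at all, which your seed inevitably leaks into the answer). The "bookkeeping" you flag as the main obstacle is not a bookkeeping issue: with this seed the exponents provably do \emph{not} assemble into $F$.

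The missing ingredient is the refined a priori lower bound for the nonlinear terms, namely \eqref{Priori u^q}--\eqref{Priori v^p}:
\begin{align*}
\int_{\mathbb{R}^n}|v(t,x)|^p\,dx\gtrsim \varepsilon^p(1+t)^{n-1-\frac{n+\mu_2-1}{2}p},
\end{align*}
which is stronger than what H\"older applied to the plain average can give, because it captures the concentration of the solution near the light cone. The paper obtains it by testing the weak formulation against a positive separated-variables solution $\Phi(t,x)=\lambda_1(t)\varphi(x)$ of the adjoint homogeneous equation, where $\varphi(x)=\int_{\mathbb{S}^{n-1}}e^{x\cdot\omega}\,d\omega$ is the Yordanov--Zhang eigenfunction of the Laplacian and $\lambda_j(t)=(1+t)^{\frac{\mu_j+1}{2}}K_{\sqrt{\delta_j}/2}(1+t)$ is built from a modified Bessel function of the second kind; the exponential growth of $\varphi$ against the exponential decay of $K_\varsigma$ cancels precisely on $|x|=t+R$ and produces the factor $(1+t)^{-\frac{n-1}{2}q}$ responsible for the Strauss-type $\frac{n-1}{2}$ in \eqref{def F(n,p,q) function}. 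Only this bound, used as the base case $j=1$ of the induction (giving $a_1=r_2+1+(n+\mu_2-1)\tfrac{p}{2}$, $b_1=r_2+n+2$), makes the recursion close on $F(n+\mu_1,p,q)$ and $F(n+\mu_2,q,p)$. Also note that the sign hypotheses \eqref{assumptions u0, u1}--\eqref{assumptions v0, v1} are exactly what make the corresponding functional $\int u\Phi\,dx$ positive and increasing, whereas your baseline degenerates entirely when $u_0\equiv 0$ and $u_1\geqslant 0$. Your remaining steps (closure of the frame, the recursions $a_{j+1}=pq\,a_{j-1}+A$ etc., and the limit $j\to\infty$ producing the lifespan bound) are consistent with the paper once the correct seed is in place.
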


\begin{rem} For $\mu_1=\mu_2=0$ the previous upper bound for the lifespan coincides with the sharp estimate for the lifespan of local solutions to the weakly coupled system of semilinear wave equations with power nonlinearities in the subcritical case. However, as we do not deal with global in time existence results for \eqref{weakly coupled system} in the present work, we do not derive a lower bound estimate for $T(\varepsilon)$. Let us underline that the shift in the first argument of $F=F(n,p,q)$ corresponds to the shift in the critical curve.
\end{rem}

Let us state the main result in the critical case.

\begin{thm}\label{Thm critical case}
Let $\mu_1,\mu_2,\nu_1^2,\nu_2^2$ be nonnegative constants such that $\delta_1, \delta_2 \geqslant 0$. Let us consider $p,q>1$ satisfying 
\begin{align}\label{critical exponent wave like case system, critical case}
\max\left\{\frac{p+2+q^{-1}}{pq-1}-\frac{\mu_1}{2},\frac{q+2+p^{-1}}{pq-1}-\frac{\mu_2}{2}\right\}=\frac{n-1}{2}\,
\end{align} and 
\begin{align}\label{technical restrictions on p,q critical case}
\tfrac{1}{p}<\tfrac{n-\sqrt{\delta_2}}{2} \, , \ \ \tfrac{1}{q}<\tfrac{n-\sqrt{\delta_1}}{2}.
\end{align}

Assume that $u_0,v_0\in H^1(\mathbb{R}^n)$ and $u_1,v_1\in L^2(\mathbb{R}^n)$ are nonnegative, pairwise nontrivial and compactly supported in $B_{r_0}$, with $r_0\in (0,1)$ .

Let $(u,v)$ be an energy solution of \eqref{weakly coupled system} with lifespan $T=T(\varepsilon)$. Then, there exists a positive constant $\varepsilon_0=\varepsilon_0(u_0,u_1,v_0,v_1,n,p,q,\mu_1,\mu_2,\nu_1^2,\nu_2^2,r_0)$ such that for any $\varepsilon\in (0,\varepsilon_0]$ the solution $(u,v)$ blows up in finite time. Moreover,
 the upper bound estimates for the lifespan
\begin{align} \label{lifespan upper bound estimate, critical case}
T(\varepsilon)\leqslant 
\begin{cases} 
\exp \big(C \varepsilon ^{-q(pq-1)}\big) & \mbox{if} \ \ 0=F(n+\mu_1,p,q)>F(n+\mu_2,q,p), \\
\exp \big(C \varepsilon ^{-p(pq-1)}\big) & \mbox{if} \ \ 0=F(n+\mu_2,q,p)>F(n+\mu_1,p,q), \\
\exp \big(C \varepsilon ^{-(pq-1)}\big) & \mbox{if} \ \ 0=F(n+\mu_1,p,q)=F(n+\mu_2,q,p), \\
\exp \big(C \varepsilon ^{-p(p-1)}\big) & \mbox{if} \ \ p=q=p_0(n+\mu_1) \ \ \mbox{and} \ \ \mu_1=\mu_2, \, \nu_1^2=\nu_2^2
\end{cases}
\end{align} hold, where C is an independent of $\varepsilon$, positive constant and $F=F(n,p,q)$ is defined by \eqref{def F(n,p,q) function}.
\end{thm}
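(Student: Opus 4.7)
The plan is to implement the test function method of Ikeda--Sobajima--Wakasa, adapted to the coupled structure of \eqref{weakly coupled system}. The first step is to exhibit, for each $j\in\{1,2\}$, a positive one-parameter family $\{\Psi_j(\cdot,\cdot;\lambda)\}_{\lambda>0}$ of self-similar solutions of the adjoint linear equation
\[
\partial_t^2 \Psi_j-\Delta\Psi_j-\partial_t\!\left(\tfrac{\mu_j}{1+t}\Psi_j\right)+\tfrac{\nu_j^2}{(1+t)^2}\Psi_j=0,
\]
in the form $\Psi_j(t,x;\lambda)=(1+t)^{-\mu_j/2}\mathrm{e}^{-\lambda(1+t)}\Phi_j(\lambda^2|x|^2/(1+t)^2)$, where $\Phi_j$ is representable through a Tricomi confluent hypergeometric function whose index depends on $\sqrt{\delta_j}$. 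From this representation I would extract positivity on the forward cone $\{|x|\leq 1+t+r_0\}$, the pointwise asymptotics $\Psi_j\simeq (1+t)^{-(\mu_j+\sqrt{\delta_j})/2}$ on that cone, and the finiteness of the weighted integrals $\int_{|x|\leq 1+t+r_0}\Psi_j^{1-r'}(t,x;\lambda)\,dx$ for $r\in\{p,q\}$; the convergence of these last integrals is exactly what forces the hypothesis \eqref{technical restrictions on p,q critical case}.

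In the second step I would introduce the functionals $F(t;\lambda)\doteq\int u(t,x)\Psi_1(t,x;\lambda)\,dx$ and $G(t;\lambda)\doteq\int v(t,x)\Psi_2(t,x;\lambda)\,dx$, fix a cutoff $\eta\in\mathcal{C}^\infty_c([0,1))$ equal to $1$ near $0$, and test \eqref{def u} and \eqref{def v} against $\Psi_1\eta(t/T)^{2p'}$ and $\Psi_2\eta(t/T)^{2q'}$ respectively. Since $\Psi_j$ solves the adjoint equation, the linear terms collapse after integration by parts, and one obtains an inequality of the form
\[
\int_0^T\!\!\int_{\mathbb{R}^n}|v|^p\,\Psi_1\,\eta^{2p'}\,dx\,dt \leq C\,\varepsilon\,\mathcal{K}_1(\lambda) + C\!\int_0^T(\eta')^2\!\int \Psi_1\, u\,dx\,dt,
\]
and a symmetric one for $|u|^q\Psi_2$. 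The positivity of the data ensures that the $\varepsilon\,\mathcal{K}_1(\lambda)$ term enters with the correct sign. A Hölder inequality based on the weighted integrability of $\Psi_j$ from step one converts these into a coupled pair of nonlinear integral inequalities in $T$ and $\lambda$ for quantities controlled by $F$ and $G$ alone.

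Step three, specific to the critical case \eqref{critical exponent wave like case system, critical case}, is to integrate the coupled inequalities against a $\lambda$-weight on the interval $[T^{-1},1]$. Because equality holds in \eqref{critical exponent wave like case system, critical case}, this integration produces a logarithmic factor $\log T$ that is unavailable in the subcritical regime, and a slicing iteration on the resulting single inequality forces blow-up unless $\log T\leq C\varepsilon^{-\sigma}$, with $\sigma\in\{q(pq-1),\,p(pq-1),\,pq-1,\,p(p-1)\}$ according to which branch of the critical curve is active and whether the full symmetry $p=q$, $\mu_1=\mu_2$, $\nu_1^2=\nu_2^2$ holds; this reproduces the four cases in \eqref{lifespan upper bound estimate, critical case}. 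The main obstacle is the first step: verifying positivity and reconciling the small-$\lambda|x|/(1+t)$ lower bound needed to drive the iteration with the large-$\lambda|x|/(1+t)$ upper bound needed to keep $\Psi_j^{1-r'}$ integrable on the cone, uniformly in $\lambda\in[T^{-1},1]$. It is precisely this matching of the two asymptotic regimes of the Tricomi function that produces both the technical condition \eqref{technical restrictions on p,q critical case} and the delicate bookkeeping underlying the four-case form of \eqref{lifespan upper bound estimate, critical case}.
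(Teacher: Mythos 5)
Your overall philosophy (test functions built from solutions of the adjoint linear equation, a mechanism that extracts a logarithm in the critical case, then an ODE-type iteration) is the right one, but there are two genuine gaps. First, the ansatz on which everything rests, $\Psi_j(t,x;\lambda)=(1+t)^{-\mu_j/2}\mathrm{e}^{-\lambda(1+t)}\Phi_j(\lambda^2|x|^2/(1+t)^2)$, is not a solution family of the adjoint equation. That equation admits two compatible structures: purely self-similar solutions $(1+t)^{-\beta+1}\phi_\beta(|x|^2/(1+t)^2)$ with $\phi_\beta$ a \emph{Gauss} hypergeometric function and no exponential factor (these are what the paper uses, see \eqref{def Phi beta}), and separated-variable solutions $(1+t)^{(\mu_j+1)/2}K_{\sqrt{\delta_j}/2}(\lambda(1+t))\,\varphi(\lambda x)$ with $\Delta\varphi=\varphi$, whose spatial factor grows like $\mathrm{e}^{\lambda|x|}$ and is not a function of the self-similar variable. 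A hybrid carrying both $\mathrm{e}^{-\lambda(1+t)}$ and an argument $\lambda^2|x|^2/(1+t)^2$ does not exist: the term $\lambda^2\Psi$ produced by differentiating the exponential twice has nothing to balance it among the self-similar terms. Since positivity, the pointwise asymptotics, and the weighted integrability you invoke are all read off this family, step one collapses as written. (Relatedly, in the paper the hypothesis \eqref{technical restrictions on p,q critical case} enters not through large-argument integrability of a Tricomi function but through the nonemptiness of the admissible interval for the auxiliary self-similar exponents $\hat\beta\in(\frac{\sqrt{\delta_2}+1-\mu_2}{2},\frac{n-\mu_2+1}{2}-\frac1p)$ and $\tilde\beta$.)

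Second, even granting a correct adjoint family, your step three does not account for how the four cases in \eqref{lifespan upper bound estimate, critical case} are actually separated. The first two cases follow from a single-branch iteration and would plausibly come out of your scheme, but the doubly critical case requires an \emph{improved} first lower bound for the functional (of order $\varepsilon^{p}$ rather than $\varepsilon^{pq}$), which in the paper rests on the algebraic identities \eqref{beta p and beta q in the double critical case} that hold only when both $F$'s vanish; without this the iteration only yields $\exp(C\varepsilon^{-\min\{p,q\}(pq-1)})$. The fourth case is sharper still ($p(p-1)<p^2-1=pq-1$) and is obtained in the paper by an entirely separate reduction: when $\mu_1=\mu_2$, $\nu_1^2=\nu_2^2$ and $p=q=p_0(n+\mu_1)$, the sum $u+v$ is a super-solution of the scalar critical equation and one invokes Proposition \ref{Prop in the crit case semi eq}. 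No amount of bookkeeping inside the coupled iteration produces that exponent, so this case needs its own argument. For comparison, the paper's route replaces your $\lambda$-integration by the functional $Y[w](R)=\int_0^R(\int\int w\,\psi_\sigma^*\,dx\,dt)\sigma^{-1}d\sigma$ (integration over cutoff scales, Lemma \ref{lemma 3.9}), derives $R\,Y'(R)\gtrsim\theta$ and $(\log R)^{p_2-1}R\,Y'(R)\gtrsim Y(R)^{p_1}$ from Lemmas \ref{lemma 3.4}, \ref{lemma 3.7} and \ref{lemma 3.8}, and concludes with the Ikeda--Sobajima--Wakasa ODE lemma (Lemma \ref{lemma 3.10}); the logarithm comes from the borderline case of Lemma \ref{lemma 3.8}, not from an integration in $\lambda$.
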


\begin{rem} In \eqref{lifespan upper bound estimate, critical case} the last case corresponds to the case in which $ 0=F(n+\mu_1,p,q)=F(n+\mu_2,q,p)$ and the scale-invariant terms  in \eqref{weakly coupled system} have the same coefficients (same partial differential operator on the left hand sides).
\end{rem}

\begin{rem} As we will see in the proof of Theorem \ref{Thm critical case}, the conditions \eqref{technical restrictions on p,q critical case} are technical requirements, which guarantee the nonemptiness of the ranges for certain parameters. Nonetheless, in dimension $n\geqslant 3$ and for $0\leqslant \delta_1,\delta_2\leqslant (n-2)^2$ the assumption on the exponents $p,q$ given by \eqref{technical restrictions on p,q critical case} is trivially satisfied for any $p,q>1$.
\end{rem}

The remaining part of this paper is organized as follows: in Section \ref{Section lower bounds} we present a solution to the corresponding adjoint linear homogeneous system, whose components have separated variables,  and we derive some lower bounds for certain functionals related to a local solution; then, in Section \ref{Section proof main thm} we prove Theorem \ref{Thm blowup iteration} using the preliminary results proved in Section \ref{Section lower bounds}. In Section \ref{Section supersol} we introduce the notion of super-solutions of the wave equation with scale-invariant damping and mass and we derive some estimates for them. A family of self-similar solutions of the adjoint equation of the linear wave equation with scale-invariant damping and mass and their properties are shown in Section \ref{Section self similar sol}. Finally, Theorem \ref{Thm critical case} is proved in Section \ref{Section critical case}. Let us underline explicitly that besides the notations that have been introduced in this introduction, the notations in Sections \ref{Section lower bounds}-\ref{Section proof main thm} (subcritical case) and the notations in Sections \ref{Section supersol}-\ref{Section self similar sol}-\ref{Section critical case} (critical case) are mutually independent and they should be not compared or overlapped by the reader.

\subsection*{Notations} Throughout this paper we will use the following notations: $B_R$ denotes the ball around the origin with radius $R$; $f \lesssim g$ means that there exists a positive constant $C$ such that $f \leqslant Cg$ and, similarly, for $f\gtrsim g$; moreover, $f\approx g$ means $f\lesssim g$ and $f\gtrsim g$; finally, as in the introduction, $p_0(n)$ denotes the Strauss exponent.

\section{Solution of the adjoint linear problem and preliminaries}
\label{Section lower bounds}

The arguments used in this section are the generalization for a weakly coupled system of those used in \cite[Section 2]{PT18} for a single equation. 
 
Before starting with the construction of a solution to the adjoint system to homogeneous system of scale-invariant wave equations, that is, a solution of the system
\begin{align} \label{adjoint hom system}
\begin{cases}
\Phi_{tt}-\Delta \Phi - \partial_t\Big(\frac{\mu_1}{1+t}\Phi\Big) +\frac{\nu_1^2}{(1+t)^2}\Phi = 0,  & x\in \mathbb{R}^n, \ t>0,  \\
\Psi_{tt}-\Delta \Psi - \partial_t\Big(\frac{\mu_2}{1+t}\Psi\Big) +\frac{\nu_2^2}{(1+t)^2}\Psi = 0,  & x\in \mathbb{R}^n, \ t>0,
\end{cases}
\end{align} we recall the definition of the modified Bessel function of the second kind of order $\varsigma$
$$K_{\varsigma}(t)=\int_0^{\infty}\exp(-t\cosh z)\cosh(\varsigma z)dz,\ \ \varsigma\in \mathbb{R}$$
which is a solution of the equation
$$\bigg(t^2\frac{d^2}{dt^2}+t\frac{d}{dt}-(t^2+\varsigma^2)\bigg)K_{\varsigma}(t)=0, \ \ t>0.$$
We collect some important properties concerning $K_\varsigma(t)$ in the case in which $\varsigma$ is a real parameter. Interested reader may refer to \cite{Erdelyi}.
 On the one hand, the following asymptotic behavior of $K_{\varsigma}(t)$ holds:
\begin{align}\label{K1}
K_{\varsigma}(t) & = \sqrt{\dfrac{\pi}{2t}}\, e^{-t}[1+O(t^{-1})]  \ \ \ \mbox{as}\ \ \  t\to \infty.
\end{align}

On the other hand, the following derivative identity holds:
\begin{align}
\frac{d}{dt}K_{\varsigma}(t) & = -K_{\varsigma+1}(t)+\frac{\varsigma}{t}K_{\varsigma}(t). \label{K4} 
\end{align}

As we will construct a solution $(\Phi,\Psi)$ with separated variables, firstly, we set the auxiliary functions with respect to the time variable, namely,
\begin{align*}
\lambda_j(t)& \doteq (1+t)^{\frac{\mu_j+1}{2}}K_{\varsigma_j}(1+t) \ \ \ \mbox{for} \ \  t\geqslant 0  \ \ \mbox{and} \ \ j=1,2,
\end{align*} where $\varsigma_j= \frac{\sqrt{\delta_j}}{2}$ for $j=1,2$.
It is clear by direct computations that $\lambda_1,\lambda_2$ satisfy
\begin{equation}
\label{lamb}
\bigg(\frac{d^2}{dt^2}-\frac{\mu_j}{1+t}\frac{d}{dt}+\frac{\mu_j+\nu_j^2}{(1+t)^2}-1\bigg)\lambda_j(t)=0 \ \  \mbox{for} \ \ t>0 \ \ \mbox{and} \ \ j=1,2.\\
\end{equation}

Following \cite{YZ06}, let us introduce the function
\begin{equation*}\varphi(x) \doteq
\begin{cases}
\int_{\mathbb{S}^{n-1}}e^{x\cdot \omega}d\omega \ \ &\mbox{if}\ \ n\geqslant2,\\
e^x+e^{-x} \ \ &\mbox{if}\ \ n=1.
\end{cases}
\end{equation*}
 The function $\varphi$  satisfies $$\Delta\varphi(x)=\varphi(x) \ \ \ \mbox{for} \ \  x\in \mathbb{R}^n $$ and the asymptotic estimate
\begin{equation}
\varphi(x)\sim C_n|x|^{-\frac{n-1}2}e^{|x|} \ \ \ \mbox{as}\ \ \ |x|\rightarrow\infty.
\end{equation}
We may introduce now the functions $\Phi,\Psi$
\begin{align*}
\Phi(t,x) &  \doteq \lambda_1(t)\, \varphi(x), \\
\Psi(t,x) &  \doteq \lambda_2(t)\, \varphi(x),
\end{align*} which constitute a solution to the adjoint system  \eqref{adjoint hom system}.

The remaining part of the section is devoted to determine lower bounds for $\int_{\mathbb{R}^n}|v(x,t)|^pdx$ and $\int_{\mathbb{R}^n}|u(x,t)|^qdx$.
\begin{lem}
Let us assume that $u_0, u_1,  v_0,  v_1$ are compactly supported in $B_R$ for some $R>0$ and that \eqref{assumptions u0, u1}, \eqref{assumptions v0, v1} are fulfilled. Then, a local energy solution $(u,v)$ satisfies $$\supp u, \, \supp v \subset\{(t,x)\in  [0,T)\times\mathbb{R}^n: |x|\leqslant t+R \}  $$ and
there exists a large $T_0$, which is independent of  $u_0, u_1,  v_0,  v_1$ and $\varepsilon$, such that for any $t>T_0$ and $p,q>1$, the following estimates hold:
\begin{align}\label{Priori u^q}
\int_{\mathbb{R}^n}|u(t,x)|^q dx & \geqslant C_1\varepsilon^q(1+t)^{n-1-\frac{n+\mu_1-1}2 q}, \\
\label{Priori v^p}
\int_{\mathbb{R}^n}|v(t,x)|^p dx & \geqslant K_1\varepsilon^p(1+t)^{n-1-\frac{n+\mu_2-1}2 p},
\end{align}
where $C_1=C_1(u_0,u_1,\varphi,q,R)>0$ and $K_1=K_1(v_0,v_1,\varphi,p,R)>0$ are independent of $\varepsilon$ and $t$.
\end{lem}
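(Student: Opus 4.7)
My plan combines finite propagation speed for the energy solution, a Wronskian-type argument for the spatial averages $\int u\varphi\,dx$, $\int v\varphi\,dx$ against the eigenfunction $\varphi$ of $\Delta$, and a final H\"older estimate against the test function $\Phi=\lambda_1\varphi$ (respectively $\Psi=\lambda_2\varphi$). For the support property, the standard truncated-cone energy identity applied to the equation for $u$, integrated over $\{(s,y)\colon 0\leqslant s\leqslant t,\ |y|\leqslant t+R-s\}$, carries through because $\mu_1,\nu_1^2\geqslant 0$ keep the lower order terms harmless, so no information propagates with speed larger than $1$.

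Setting $\mathcal{U}(t)\doteq\int u(t,x)\varphi(x)\,dx$ and using $\Delta\varphi=\varphi$ together with the compact $x$-support of $u$, testing the equation for $u$ against $\varphi$ in the sense of Definition \ref{def energ sol intro} yields
\[
\mathcal{U}''(t)+\tfrac{\mu_1}{1+t}\mathcal{U}'(t)+\tfrac{\nu_1^2}{(1+t)^2}\mathcal{U}(t)-\mathcal{U}(t)=\int_{\mathbb{R}^n}|v(t,x)|^p\varphi(x)\,dx\geqslant 0,
\]
with $\mathcal{U}(0)=\varepsilon\int u_0\varphi\,dx$, $\mathcal{U}'(0)=\varepsilon\int u_1\varphi\,dx$, and analogously for $\mathcal{V}$. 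A direct computation mirroring the derivation of \eqref{lamb} shows that $\xi_1(t)\doteq(1+t)^{(1-\mu_1)/2}K_{\varsigma_1}(1+t)$ solves the homogeneous version of this ODE; consequently the Wronskian $W_1(t)\doteq\xi_1(t)\mathcal{U}'(t)-\xi_1'(t)\mathcal{U}(t)$ satisfies
\[
W_1'(t)+\tfrac{\mu_1}{1+t}W_1(t)=\xi_1(t)\int_{\mathbb{R}^n}|v|^p\varphi\,dx\geqslant 0,
\]
so that $(1+t)^{\mu_1}W_1(t)$ is non-decreasing.

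The sign analysis at $t=0$ is where the assumption \eqref{assumptions u0, u1} enters. By \eqref{K4}, $\xi_1'(0)=\tfrac{1-\mu_1+\sqrt{\delta_1}}{2}K_{\varsigma_1}(1)-K_{\varsigma_1+1}(1)$, and therefore
\[
W_1(0)=\varepsilon K_{\varsigma_1}(1)\int_{\mathbb{R}^n}\!\Bigl(u_1+\tfrac{\mu_1-1-\sqrt{\delta_1}}{2}u_0\Bigr)\varphi\,dx+\varepsilon K_{\varsigma_1+1}(1)\int_{\mathbb{R}^n} u_0\,\varphi\,dx,
\]
whose two summands are separately non-negative under \eqref{assumptions u0, u1}, so $W_1(0)\gtrsim\varepsilon$ whenever the data do not all vanish. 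Combining $W_1(t)\geqslant(1+t)^{-\mu_1}W_1(0)$ with $\tfrac{d}{dt}(\mathcal{U}/\xi_1)=W_1/\xi_1^2$ and the asymptotic \eqref{K1}, which yields $(1+t)^{\mu_1}\xi_1(t)^2\sim\tfrac{\pi}{2}e^{-2(1+t)}$, an integration from $0$ to $t$ then produces
\[
\mathcal{U}(t)\gtrsim\varepsilon\,(1+t)^{-\mu_1/2}e^{1+t}\qquad\text{for all }t\geqslant T_0,
\]
with $T_0$ depending only on the Bessel remainder control in $\mu_1,\nu_1^2$.

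Finally, since $\lambda_1(t)\sim \sqrt{\pi/2}\,(1+t)^{\mu_1/2}e^{-(1+t)}$ by \eqref{K1}, the decay of $\lambda_1$ cancels the growth of $\mathcal{U}$ and $\int u\,\Phi\,dx=\lambda_1(t)\mathcal{U}(t)\gtrsim\varepsilon$ for $t\geqslant T_0$. The support property together with H\"older's inequality gives
\[
\int_{\mathbb{R}^n}|u|^q\,dx\geqslant\frac{\bigl(\int u\,\Phi\,dx\bigr)^q}{\bigl(\int_{|x|\leqslant t+R}\Phi^{q'}\,dx\bigr)^{q-1}},
\]
and the denominator is computed explicitly using $\varphi(x)\sim C_n|x|^{-(n-1)/2}e^{|x|}$: the exponentials cancel and one is left with a pure power of $(1+t)$ giving exactly the exponent $n-1-\tfrac{q(n+\mu_1-1)}{2}$ in \eqref{Priori u^q}. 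The estimate \eqref{Priori v^p} follows at once by exchanging $(u,q,\mu_1,\nu_1^2,\varsigma_1)$ with $(v,p,\mu_2,\nu_2^2,\varsigma_2)$. I expect the main technical obstacle to lie in the sign analysis of $W_1(0)$: the shift $\tfrac{\mu_1-1-\sqrt{\delta_1}}{2}$ in \eqref{assumptions u0, u1}, which at first reading looks ad hoc, is in fact dictated precisely by the recurrence \eqref{K4}, which splits $W_1(0)$ into two separately non-negative pieces.
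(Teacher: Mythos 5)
Your proof is correct and is essentially the paper's argument in a different packaging: the paper tests against the adjoint solution $\Phi=\lambda_1\varphi$ and derives a first-order differential inequality for $F=\int u\,\Phi\,dx$, while you test against $\varphi$ alone and run a Wronskian argument with the homogeneous solution $\xi_1=(1+t)^{(1-\mu_1)/2}K_{\varsigma_1}(1+t)$ of the direct ODE; since $\lambda_1=(1+t)^{\mu_1}\xi_1$ and $F=\lambda_1\mathcal{U}$, the paper's integrated quantity $(1+t)^{\mu_1}\lambda_1^{-2}F$ is exactly your $\mathcal{U}/\xi_1$, your $W_1(0)$ coincides with the paper's $\varepsilon\,C(u_0,u_1)$, and the same splitting via \eqref{K4} justifies the sign condition \eqref{assumptions u0, u1}. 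The remaining steps (asymptotics \eqref{K1}, H\"older against $\Phi^{q'}$, and the exponent bookkeeping giving $n-1-\tfrac{n+\mu_1-1}{2}q$) match the paper line for line.
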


\begin{proof} We begin with \eqref{Priori u^q}.
Let us define the functional
\begin{align*}
F(t)\doteq \int_{\mathbb{R}^n}u(t,x)\Phi(t,x)dx 
\end{align*}
with $\Phi$ defined as above. Then, by H\"{o}lder inequality, we have
\begin{equation}\int_{\mathbb{R}^n}|u(t,x)|^qdx\geqslant |F(t)|^q\bigg(\int_{|x|\leqslant t+R}\Phi^{q'}(t,x)dx\bigg)^{-(q-1)},\label{factor}
\end{equation} where $q'$ denotes the conjugate exponent of $q$.

The next step is to determine a lower bound for $F(t)$ and an upper bound for  $\int_{|x|\leqslant t+R}\Phi^{q'}(t,x)dx$,  respectively.
Due to the support property for $u$, we can apply the definition of weak solution with test function $\Phi$. So, for any $t\in (0,T)$ we have
\begin{align*}
 &  \int_{\mathbb{R}^n}\Big(u_t(s,x)\Phi(s,x)-u(s,x)\Phi_t(s,x)+\frac{\mu_1}{1+s}u(s,x)\Phi(s,x)\Big) dx \ \bigg|_{s=0}^{s=t}  \\ & \quad + \int_0^t\int_{\mathbb{R}^n} u(s,x) \Big(\Phi_{ss}(s,x)-\Delta \Phi(s,x)-\partial_s\Big(\frac{\mu_1}{1+s}\Phi(s,x)\Big)+\frac{\nu_1^2}{(1+s)^2}\Phi(s,x)\Big) dx ds 	\\ & =\int_0^t\int_{\mathbb{R}^n}|v(s,x)|^p\Phi(s,x) \,dxds . 
\end{align*}

As the product $|v|^p \, \Phi$ is nonnegative and $\Phi$ solves the first equation in \eqref{adjoint hom system}, from the previous equality we obtain
\begin{align*}
F'(t)+\bigg(\frac{\mu_1}{1+t}-2\frac{\lambda'_1(t)}{\lambda_1(t)}\bigg)F(t)\geqslant \varepsilon \int_{\mathbb{R}^n}\Big(\lambda_1(0) \, u_1(x)+\big(\mu_1\lambda_1(0)-\lambda'_1(0)\big)\, u_0(x)\Big)\varphi(x)\, dx.
\end{align*}
Using \eqref{K4}, we have
\begin{align*}
\lambda'_1(t) & =\tfrac{\mu_1+1}2(1+t)^{\frac{\mu_1-1}2}K_{\varsigma_1}(1+t)+(1+t)^{\frac{\mu_1+1}2}K'_{\varsigma_1}(1+t)\nonumber\\
& =\tfrac{\mu_1+1}2(1+t)^{\frac{\mu_1-1}2}K_{\varsigma_1}(1+t)+(1+t)^{\frac{\mu_1+1}{2}}\left(-K_{\varsigma_1+1}(1+t)+\tfrac{\varsigma_1}{(1+t)}K_{\varsigma_1}(1+t)\right)\nonumber\\
& =\tfrac{\mu_1+1+\sqrt{\delta_1}}2(1+t)^{\frac{\mu_1-1}2}K_{\varsigma_1}(1+t)-(1+t)^{\frac{\mu_1+1}{2}}K_{\varsigma_1+1}(1+t).
\end{align*}
Also,
\begin{align*}
\lambda'_1(0)&=\tfrac{\mu_1+1+\sqrt{\delta_1}}2K_{\varsigma_1}(1)-K_{\varsigma_1+1}(1), \\
\mu_1\lambda_1(0)-\lambda'_1(0) & =\tfrac{\mu_1-1-\sqrt{\delta_1}}2K_{\varsigma_1}(1)+K_{\varsigma_1+1}(1).
\end{align*}
 Consequently,
\begin{align*}
&\lambda_1(0) \, u_1(x)+\big(\mu_1\lambda_1(0)-\lambda'_1(0)\big)\, u_0(x)=K_{\varsigma_1}(1)\left(u_1(x)+\tfrac{\mu_1-1-\sqrt{\delta_1}}{2}u_0(x)	\right)+K_{\varsigma_1+1}(1)\, u_0(x).
\end{align*}
If we denote $$C(u_0,u_1) \doteq \int_{\mathbb{R}^n}\Big(\lambda(0)\, u_1(x)+\big(\mu_1\lambda(0)-\lambda'(0)\big)\,u_0(x)\Big)\varphi(x)\, dx,$$
then, since we assume that $u_0$ and $u_1$ are compactly supported and satisfy \eqref{assumptions u0, u1}, $C(u_0,u_1)$ is finite and positive.
Therefore, we conclude that $F$ satisfies the differential inequality
$$F'(t)+\bigg(\frac{\mu_1}{1+t}-2\frac{\lambda'_1(t)}{\lambda_1(t)}\bigg)F(t)\geqslant\varepsilon \, C(u_0,u_1).$$
Multiplying by $\frac{(1+t)^{\mu_1}}{\lambda_1^2(t)}$  both sides of the previous inequality and then integrating over $[0,t]$, we derive
$$F(t)\geqslant\varepsilon \, C(u_0,u_1)\,\frac{\lambda_1^2(t)}{(1+t)^{\mu_1}}\int_0^t\frac{(1+s)^{\mu_1}}{\lambda_1^2(s)}\, ds.$$
Inserting $\lambda_1(t)=(1+t)^{\frac{\mu_1+1}{2}}K_{\varsigma_1}(1+t)$, we obtain as  lower bound for $F$
\begin{equation} F(t)\geqslant\varepsilon \,  C(u_0,u_1)\int_0^t\frac{(1+t)K^2_{\varsigma_1}(1+t)}{(1+s)K^2_{\varsigma_1}(1+s)}\, ds \,  \geqslant 0.\label{G1}\end{equation}

 The integral involving $\Phi^{q'}$ in the right-hand side of \eqref{factor} can be estimated as in \cite[estimate (2.5)]{YZ06}, namely,
\begin{align}
\int_{|x|\leqslant t+R}\Phi^{q'}(t,x)\, dx & \leqslant \lambda_1^\frac{q}{q-1}(t)\int_{|x|\leqslant t+R}\varphi^{q'}(x)\, dx\notag\\
&\leqslant C_{\varphi,R}(1+t)^{n-1+\big(\frac{\mu_1+1}{2}-\frac{n-1}{2}\big)\frac{q}{q-1}}e^{\frac{q}{q-1}(t+R)}K^{\frac{q}{q-1}}_{\varsigma_1}(1+t),\label{deno}
\end{align}
where $C_{\varphi,R}$ is a suitable positive constant.

Combing the estimate \eqref{G1}, \eqref{deno} and \eqref{factor}, we find
\begin{align*} \int_{\mathbb{R}^n} &|u(x,t)|^qdx \\
&\geqslant C(u_0,u_1)^{q}C_{\varphi,R}^{1-q}\varepsilon^q(1+t)^{q -(n-1)(q-1)-\big(\frac{\mu_1+1}{2}-\frac{n-1}{2}\big)q} e^{-q(t+R)} K^q_{\varsigma_1}(1+t)\bigg(\int_0^t\frac{ds}{(1+s)K^2_{\varsigma_1}(1+s)}\bigg)^q \\
&\geqslant C(u_0,u_1)^{q}C_{\varphi,R}^{1-q}e^{q(1-R)}\varepsilon^q(1+t)^{(2-n-\mu_1)\frac q2+(n-1)} e^{-q(1+t)} K^q_{\varsigma_1}(1+t)\bigg(\int_0^t\frac{ds}{(1+s)K^2_{\varsigma_1}(1+s)}\bigg)^q.
\end{align*}
Due to \eqref{K1}, for a sufficiently large $T_0$ (which is independent of $u_0, u_1,\varepsilon$) and $t>T_0$, we have
$$K^q_{\varsigma_1}(1+t)\sim \bigg(\frac{\pi}{2(1+t)}\bigg)^\frac q2 e^{-q(t+1)}$$
and
\begin{align*}\int_0^t\frac1{(1+s)K^2_{\varsigma_1}(1+s)}ds &\geqslant \frac{2}\pi \int_{t/2}^t  e^{2(1+s)}ds = \frac1\pi\big(e^{2(1+t)}-e^{2+t}\big)\geqslant\frac1{2\pi }e^{2(1+t)}.
\end{align*}
Consequently,
$$\int_{\mathbb{R}^n}|u(t,x)|^qdx\geqslant C_{1}\varepsilon^q(1+t)^{\frac q2(1-n-\mu_1)+(n-1)}\ \ \mbox{for}\ \ t>T_0,$$
where $C_1\doteq 2^{-\frac{3q}{2}}C(u_0,u_1)^{q}C_{\varphi,R}^{1-q}e^{q(1-R)}\pi^{-\frac{q}{2}}.$ The proof of \eqref{Priori v^p} is analogous, as one has to consider the functional $$G(t)\doteq \int_{\mathbb{R}^n}v(t,x)\Psi(t,x)dx$$ instead of $F$ and to use the assumption \eqref{assumptions v0, v1} in place of \eqref{assumptions u0, u1}.  This concludes the proof.
\end{proof}

\section{Subcritical case: Proof of Theorem \ref{Thm blowup iteration}} \label{Section proof main thm}

Let us consider a local solution $(u,v)$ of \eqref{weakly coupled system} on $[0,T)$ and define the couple of time-dependent functionals
\begin{align*}
U(t)   \doteq \int_{\mathbb{R}^n} u(t,x) \, dx ,  \quad
V(t)   \doteq \int_{\mathbb{R}^n} v(t,x) \, dx.
\end{align*}

The proof of Theorem \ref{Thm blowup iteration} is dived in two step. The first step consists in the determination a coupled system of nonlinear ordinary integral inequalities for $U$ and $V$ (iteration frame), while in the second one an iteration argument is used to show the blow-up of $(U,V)$ in finite time. 

\subsubsection*{Determination of the iteration frame}
Let us begin with the first step.

Choosing  $\phi=\phi(s,x)$ and $\psi=\psi(s,x)$ in \eqref{def u} and in \eqref{def v}, respectively, that satisfy $\phi\equiv 1 \equiv \psi$ on $\{(x,s)\in  [0,t]\times \mathbb{R}^n :|x|\leqslant s+R\}$, we obtain
\begin{align*}&\int_{\mathbb{R}^n}u_t(t,x)\,dx-\int_{\mathbb{R}^n}u_t(0,x)\,dx+\int_0^t \int_{\mathbb{R}^n}\bigg(\frac{\mu_1 u_t(s,x)}{1+s}+\frac{\nu^2_1u(s,x)}{(1+s)^2}\bigg)dx \, ds
=\int_0^t \int_{\mathbb{R}^n}|v(s,x)|^pdx \, ds, \\
&\int_{\mathbb{R}^n}v_t(t,x)\,dx-\int_{\mathbb{R}^n}v_t(0,x)\,dx+\int_0^t \int_{\mathbb{R}^n}\bigg(\frac{\mu_2 v_t(s,x)}{1+s}+\frac{\nu^2_2 v(s,x)}{(1+s)^2}\bigg)dx \, ds
=\int_0^t \int_{\mathbb{R}^n}|u(s,x)|^q dx \, ds
\end{align*}
which means that
\begin{align*}
& U'(t)-U'(0)+\int_0^t\frac{\mu_1 }{1+s}U'(s)\,ds+\int_0^t\frac{\nu_1^2}{(1+s)^2}U(s)\,ds=\int_0^t \int_{\mathbb{R}^n}|v(s,x)|^pdx \, ds, \\
& V'(t)-V'(0)+\int_0^t\frac{\mu_2 }{1+s}V'(s)\,ds+\int_0^t\frac{\nu_2^2 }{(1+s)^2}V(s)\,ds=\int_0^t \int_{\mathbb{R}^n}|u(s,x)|^q dx \, ds.
\end{align*}
Differentiating with repect to $t$ the previous equalities, we get
\begin{align}\label{U-Dyn}
U''(t)+\frac{\mu_1}{1+t}U'(t)+\frac{\nu^2_1}{(1+t)^2}U(t)=\int_{\mathbb{R}^n}|v(t,x)|^p dx, \\
\label{V-Dyn}
V''(t)+\frac{\mu_2}{1+t}V'(t)+\frac{\nu^2_2}{(1+t)^2}V(t)=\int_{\mathbb{R}^n}|u(t,x)|^q dx.
\end{align}

Let us consider the quadratic equations
$$r^2-(\mu_1-1)r+\nu_1^2=0, \quad \rho^2-(\mu_2-1)\rho+\nu_2^2=0.$$
Since $\delta_1,\delta_2\geqslant0$ there exit two pair of real roots,
$$r_{1,2}\doteq\frac{\mu_1-1\mp\sqrt{\delta_1}}{2},\ \ \ \rho_{1,2}\doteq\frac{\mu_2-1\mp\sqrt{\delta_2}}{2}.$$
Clearly, if $\mu_1>1$ and $\mu_2>1$, then, $r_{1,2}$ and $\rho_{1,2}$ are positive. Else, if $0\leqslant\mu_1<1$ or $0\leqslant\mu_2<1$, then, $r_{1,2}$ or $\rho_{1,2}$ are negative. When $\mu_1=1$, then, $\nu_1=0$ as $\delta_1\geqslant0$ and, hence, $r_1=r_2=0$. Similarly, if $\mu_2=1$.
Moreover, in all cases  $$r_{1,2}+1>0 \ \ \mbox{and} \ \ \rho_{1,2}+1>0.$$

We may rewrite \eqref{U-Dyn} as
$$\Big(U'(t)+\frac{r_1}{1+t}U(t)\Big)'+\frac{r_2+1}{1+t}\Big(U'(t)+\frac{r_1}{1+t}U(t)\Big)=\int_{\mathbb{R}^n}|v(t,x)|^pdx.$$
Multiplying by $(1+t)^{r_2+1}$ and integrating over $[0,t]$, we obtain
$$(1+t)^{r_2+1}\Big(U'(t)+\frac{r_1}{1+t}U(t)\Big)-\Big(U'(0)+r_1U(0)\Big)=\int_0^t (1+s)^{r_2+1}\int_{\mathbb{R}^n}|v(s,x)|^pdx \,ds.$$
Using \eqref{assumptions u0, u1}, we have
\begin{equation}\label{key}
U'(t)+\frac{r_1}{1+t}U(t)>(1+t)^{-r_2-1}\int_0^t(1+s)^{r_2+1}\int_{\mathbb{R}^n}|v(s,x)|^pdx \, ds.
\end{equation}
Multiplying the above inequality by $(1+t)^{r_1}$ and integrating over $[0,t]$, we arrive at
\begin{equation*}
(1+t)^{r_1}U(t)-U(0)>\int_0^t(1+\tau)^{r_1-r_2-1}\int_0^\tau(1+s)^{r_2+1} \int_{\mathbb{R}^n}|v(s,x)|^pdx \, ds \, d\tau.
\end{equation*}
Since $u_0$ is nonnegative, we have
\begin{align}
 U(t)&\geqslant \int_0^t\bigg(\frac{1+\tau}{1+t}\bigg)^{r_1} \int_0^\tau \bigg(\frac{1+s}{1+\tau}\bigg)^{r_2+1} \int_{\mathbb{R}^n}|v(s,x)|^pdx \, ds\, d\tau.  \label{iter1}
\end{align}

 Furthermore, using H\"{o}lder inequality and the compactness of the support of solution with respect to $x$, we get from \eqref{iter1}
\begin{align}
  U(t) &\geqslant C_0\int_0^t\bigg(\frac{1+\tau}{1+t}\bigg)^{r_1}\int_0^\tau \bigg(\frac{1+s}{1+\tau}\bigg)^{r_2+1}(1+s)^{-n(p-1)}|V(s)|^p ds\, d\tau ,\label{iter2}
\end{align}
where $$C_0\doteq (\meas(B_1))^{1-p}R^{-n(p-1)}>0.$$

In a similar way, from \eqref{V-Dyn} we may derive
\begin{align}
V(t)&\geqslant \int_0^t\bigg(\frac{1+\tau}{1+t}\bigg)^{\rho_1} \int_0^\tau \bigg(\frac{1+s}{1+\tau}\bigg)^{\rho_2+1} \int_{\mathbb{R}^n}|u(s,x)|^q dx \, ds\, d\tau  \label{iter3} \\
 &\geqslant K_0\int_0^t\bigg(\frac{1+\tau}{1+t}\bigg)^{\rho_1}\int_0^\tau \bigg(\frac{1+s}{1+\tau}\bigg)^{\rho_2+1}(1+s)^{-n(q-1)}|U(s)|^q ds\, d\tau\label{iter4},
\end{align} where $$K_0\doteq(\meas(B_1))^{1-q}R^{-n(q-1)}>0.$$

\subsubsection*{Iteration argument}
Now we can proceed with the second step. We shall apply an iteration method based on lower bound estimates \eqref{Priori u^q}, \eqref{Priori v^p} and on the iteration frame \eqref{iter1}-\eqref{iter4}.
In comparison to the iteration method for a single semilinear wave equation with scale-invariant damping and mass (cf. \cite[Section 3]{PT18}), as the system is weakly coupled, we will combine the lower bounds for $U$ and $V$.

By using an induction argument, we will prove that 
\begin{align}
U(t)& \geqslant D_j(1+t)^{-a_j}(t-T_0)^{b_j} \quad \mbox{for} \ \ t\geqslant T_0, \label{lower bound U j} \\
V(t)& \geqslant \Delta_j(1+t)^{-\alpha_j}(t-T_0)^{\beta_j} \quad \mbox{for} \ \ t\geqslant T_0, \label{lower bound V j}
\end{align} where $\{a_j\}_{j\geqslant 1}$, $\{b_j\}_{j\geqslant 1}$, $\{D_j\}_{j\geqslant 1}$, $\{\alpha_j\}_{j\geqslant 1}$, $\{\beta_j\}_{j\geqslant 1}$ and $\{\Delta_j\}_{j\geqslant 1}$ are suitable sequences of positive real numbers that we shall determine throughout the iteration procedure.

Let us begin with the base case $j=1$ in \eqref{lower bound U j} and \eqref{lower bound V j}. Plugging \eqref{Priori v^p} in \eqref{iter1} and shrinking the domain of integration, we find for $t\geqslant T_0$
\begin{align*}
U(t) & \geqslant K_1\varepsilon^p (1+t)^{-r_1} \int_{T_0}^t (1+\tau)^{r_1-r_2-1}\int_{T_0}^\tau  (1+s)^{r_2+n-(n+\mu_2-1)\frac{p}{2}} \, ds\, d\tau \\
& \geqslant K_1\varepsilon^p (1+t)^{-r_1} \int_{T_0}^t (1+\tau)^{r_1-r_2-1-(n+\mu_2-1)\frac{p}{2}}\int_{T_0}^\tau  (1+s)^{r_2+n} \, ds\, d\tau \\
& \geqslant K_1\varepsilon^p (1+t)^{-r_2-1-(n+\mu_2-1)\frac{p}{2}} \int_{T_0}^t \int_{T_0}^\tau  (s-T_0)^{r_2+n} \, ds\, d\tau \\
& = K_1\varepsilon^p (1+t)^{-r_2-1-(n+\mu_2-1)\frac{p}{2}}  \frac{(t-T_0)^{r_2+n+2}}{(r_2+n+1)(r_2+n+2)},
\end{align*} which is the desired estimate, if we put
\begin{align*}
D_1 &\doteq \frac{ K_1\varepsilon^p}{(r_2+n+1)(r_2+n+2)},\qquad  a_1 \doteq r_2+1+(n+\mu_2-1)\frac{p}{2}, \quad b_1\doteq r_2+n+2.
\end{align*}
Analogously, we can prove \eqref{lower bound V j} for $j=1$ combining \eqref{iter3} and \eqref{Priori u^q}, provided that
\begin{align*}
\Delta_1 & \doteq \frac{ C_1\varepsilon^q}{(\rho_2+n+1)(\rho_2+n+2)}, \qquad \alpha_1 \doteq \rho_2+1+(n+\mu_1-1)\frac{q}{2}, \quad \beta_1\doteq \rho_2+n+2.
\end{align*}

Let us proceed with the inductive step: \eqref{lower bound U j} and \eqref{lower bound V j} are assumed to be true for $j\geqslant 1$, we prove them for $j+1$. Let us plug \eqref{lower bound V j} in \eqref{iter2}. Then, shrinking the domain of integration and using the positiveness of $\alpha_j$ and $\beta_j$ and the condition $r_2+1>0$, for $t\geqslant T_0$ we get
\begin{align*}
 U(t) &\geqslant C_0 \, \Delta_j^p  \, (1+t)^{-r_1} \int_{T_0}^t (1+\tau)^{r_1-r_2-1}\int_{T_0}^\tau  (1+s)^{r_2+1+n(1-p)-\alpha_j p}(s-T_0)^{\beta_j p} \, ds\, d\tau \\ 
 &\geqslant C_0 \, \Delta_j^p  \, (1+t)^{-r_1} \int_{T_0}^t (1+\tau)^{r_1-r_2-1-n(p-1)-\alpha_j p}\int_{T_0}^\tau  (s-T_0)^{r_2+1+\beta_j p} \, ds\, d\tau \\
 &\geqslant C_0 \, \Delta_j^p  \, (1+t)^{-r_2-1-n(p-1)-\alpha_j p} \int_{T_0}^t \int_{T_0}^\tau  (s-T_0)^{r_2+1+\beta_j p} \, ds\, d\tau \\
 & = \frac{C_0 \, \Delta_j^p }{(r_2+2+\beta_j p)(r_2+3+\beta_j p)} (1+t)^{-r_2-1-n(p-1)-\alpha_j p} (t-T_0)^{r_2+3+\beta_j p},
\end{align*} that is, \eqref{lower bound U j} for $j+1$ provided that
\begin{align*}
D_{j+1}\doteq \frac{C_0 \, \Delta_j^p }{(r_2+2+\beta_j p)(r_2+3+\beta_j p)}, \qquad a_{j+1}\doteq r_2+1+n(p-1)+\alpha_j p, \quad b_{j+1}\doteq r_2+3+\beta_j p. 
\end{align*}
Similarly, we can prove \eqref{lower bound V j} for $j+1$ combining \eqref{iter4} and \eqref{lower bound U j}, in the case in which
\begin{align*}
\Delta_{j+1}\doteq \frac{K_0 \, D_j^q }{(\rho_2+2+b_j q)(\rho_2+3+b_j q)} \qquad \alpha_{j+1}\doteq \rho_2+1+n(q-1)+a_j q, \quad \beta_{j+1}\doteq \rho_2+3+b_j q. 
\end{align*}

Let us determine explicitly the expression for $a_j,b_j,\alpha_j,\beta_j$ at least for odd $j$. Let us start with $a_j$. Using the previous relations, we have
\begin{align*}
a_j &= r_2+1+n(p-1)+\alpha_{j-1} \, p =  r_2+1+n(p-1)+\big(\rho_2+1+n(q-1)+a_{j-2}\, q\big) p \\
& = \underbrace{r_2+1-n+(\rho_2+1)p+npq}_{\doteq A}+pq \, a_{j-2}.
\end{align*} Applying iteratively the previous relation, for odd $j$ we get 
\begin{align}
a_j &= A + pq \, a_{j-2} = A + A \, pq \, a_{j-2}+ (pq)^2 a_{j-4} = \ \ \cdots \notag\\
&  = A \sum_{k=0}^{(j-3)/2} (pq)^k + (pq)^{\frac{j-1}{2}}a_1 = A \frac{(pq)^{\frac{j-1}{2}}-1}{pq-1} + (pq)^{\frac{j-1}{2}}a_1 \notag \\ &= \bigg(\frac{A}{pq-1}+a_1\bigg)(pq)^{\frac{j-1}{2}} -\frac{A}{pq-1}.   \label{explicit epression a j}
\end{align}

In a similar way, for odd $j$ we get
\begin{align}
\alpha_j = \bigg(\frac{\widetilde{A}}{pq-1}+\alpha_1\bigg)(pq)^{\frac{j-1}{2}} -\frac{\widetilde{A}}{pq-1},  \label{explicit epression alpha j}
\end{align} where $\widetilde{A}\doteq \rho_2+1-n+(r_2+1)q+npq$. 
For the sake of simplicity we do not derive the representations of $a_j$ and $\alpha_j$ for even $j$, as it is unnecessary to prove the theorem. 

Analogously, for odd $j$ we have, combining the definitions of $b_j$ and $\beta_j$,
\begin{align}
b_j & = r_2+3+\beta_{j-1} \,p = r_2+3+\big( \rho_2+3+b_{j-2}\, q\big) p = \underbrace{r_2+3+(\rho_2+3)p}_{\doteq B}+ pq \, b_{j-2},\label{iterative relation b j} \\
\beta_j & = \rho_2+3+b_{j-1} \,q = \rho_2+3+\big( r_2+3+\beta_{j-2}\, p\big) q = \underbrace{\rho_2+3+(r_2+3)q}_{\doteq \widetilde{B}}+ pq \, \beta_{j-2}. \label{iterative relation beta j}
\end{align} Also,
\begin{align}
b_j = \bigg(\frac{B}{pq-1}+b_1\bigg)(pq)^{\frac{j-1}{2}} -\frac{B}{pq-1} \ \ \mbox{and} \ \
\beta_j = \bigg(\frac{\widetilde{B}}{pq-1}+\beta_1\bigg)(pq)^{\frac{j-1}{2}} -\frac{\widetilde{B}}{pq-1}. \label{explicit epression b j and beta j}
\end{align}

The next step is to derive lower bounds for $D_j$ and $\Delta_j$. From  the definition of $D_j$ and $\Delta_j$ it follows immediately 
\begin{align}\label{lower bound Dj Deltaj n1}
D_{j} & \geqslant \frac{C_0  }{b_{j}^2}  \Delta_{j-1}^p \ \ \mbox{and} \ \ \
\Delta_{j}  \geqslant \frac{K_0 }{\beta_{j}^2} D_{j-1}^q .
\end{align} 
Therefore, the next step is to determine upper bounds for $b_j$ and for $\beta_j$, respectively. If $j$ is odd, plugging the first equation from \eqref{explicit epression b j and beta j} for $j-2$ in \eqref{iterative relation b j} and using the definition of $B$, it follows
\begin{align*}
b_j & =r_2+3+(\rho_2+3)p+ pq \, \bigg[\bigg(\frac{B}{pq-1}+b_1\bigg)(pq)^{\frac{j-3}{2}} -\frac{B}{pq-1}\bigg] \\
& =r_2+3+\bigg(\rho_2+3-\frac{Bq}{pq-1}\bigg)p+\bigg(\frac{B}{pq-1}+b_1\bigg)(pq)^{\frac{j-1}{2}} \\
& =-\frac{(r_2+3)+(\rho_2+3)p}{pq-1}+\bigg(\frac{B}{pq-1}+b_1\bigg)(pq)^{\frac{j-1}{2}} < B_0 \, (pq)^{\frac{j-1}{2}},
\end{align*} where $$B_0\doteq \frac{B}{pq-1}+b_1=n-1+\frac{(r_2+3)pq+(\rho_2+3)p}{pq-1}>0.$$
Similarly, for odd $j$, employing \eqref{iterative relation beta j} and the second equation in \eqref{explicit epression b j and beta j}, one finds
\begin{align*}
\beta_j < \widetilde{B}_0 \, (pq)^{\frac{j-1}{2}},
\end{align*} where $$\widetilde{B}_0\doteq \frac{\widetilde{B}}{pq-1}+\beta_1=n-1+\frac{(\rho_2+3)pq+(r_2+3)q}{pq-1}>0.$$
It is possible to derive similar estimates also for $b_{j-1}$ and $\beta_{j-1}$. Indeed, from \eqref{iterative relation b j} and \eqref{explicit epression b j and beta j} we get
\begin{align*}
b_{j-1}& =r_2+3+\beta_{j-2}\, p= r_2+3-\frac{\widetilde{B}p}{pq-1}+\frac{1}{q}\bigg(\frac{\widetilde{B}}{pq-1}+\beta_1\bigg)(pq)^{\frac{j-1}{2}} < \widetilde{B}_0 \, (pq)^{\frac{j-1}{2}}, \\
\beta_{j-1}& =\rho_2+3+b_{j-2}\, q= \rho_2+3-\frac{Bq}{pq-1}+\frac{1}{p}\bigg(\frac{B}{pq-1}+b_1\bigg)(pq)^{\frac{j-1}{2}} < B_0 \, (pq)^{\frac{j-1}{2}}.
\end{align*}
Hence, due to the above derived upper bounds for $b_j,b_{j-1},\beta_j,\beta_{j-1}$, from \eqref{lower bound Dj Deltaj n1} it follows
\begin{align} \label{lower bound Dj  n2}
D_j & \geqslant \frac{C_0}{B_0^2}\frac{\Delta_{j-1}^p}{(pq)^{j-1}} \geqslant \frac{C_0 K_0^p}{B_0^2} \frac{D_{j-2}^{pq}}{(pq)^{j-1}\beta_{j-1}^{2p}}  \geqslant 
\frac{\widetilde{C} D_{j-2}^{pq}}{\big((pq)^{p+1}\big)^{j-1}},\\  \label{lower bound Deltaj  n2}
\Delta_j & \geqslant \frac{K_0}{\widetilde{B}_0^2}\frac{D_{j-1}^q}{(pq)^{j-1}} \geqslant \frac{K_0 C_0^q}{\widetilde{B}_0^2} \frac{\Delta_{j-2}^{pq}}{(pq)^{j-1}b_{j-1}^{2q}}  \geqslant 
  \frac{\widetilde{K} \Delta_{j-2}^{pq}}{\big((pq)^{q+1}\big)^{j-1}},
\end{align} where $\widetilde{C}\doteq C_0 K_0^p/B_0^{2(p+1)}$ and $\widetilde{K}\doteq K_0 C_0^q/\widetilde{B}_0^{2(q+1)}$. 

From \eqref{lower bound Dj  n2}, if $j$ is odd, then, it follows
\begin{align*}
\log D_j & \geqslant pq \log D_{j-2} -(j-1)(p+1)\log (pq) +\log \widetilde{C} \\
& \geqslant (pq)^2 \log D_{j-4} -\big((j-1)+(j-3)pq\big)(p+1)\log (pq) +\big(1+pq\big)\log \widetilde{C} \\
 & \geqslant \ \cdots \\
 & \geqslant (pq)^{\frac{j-1}{2}} \log D_{1} -\Bigg(\sum_{k=1}^{(j-1)/2}(j+1-2k)\,(pq)^{k-1}\Bigg)(p+1)\log (pq) +\Bigg(\sum_{k=0}^{(j-3)/2}(pq)^k\Bigg)\log \widetilde{C}.
\end{align*}

Using an inductive argument, the following formulas can be shown:
\begin{align*}
\sum_{k=0}^{(j-3)/2}(pq)^k =\frac{(pq)^{\frac{j-1}{2}}-1}{pq-1}
\end{align*} and
\begin{align*}
\sum_{k=1}^{(j-1)/2}(j+1-2k)\, (pq)^{k-1}= \frac{1}{pq-1} \bigg(2(pq) \, \frac{(pq)^{\frac{j-1}{2}}-1}{pq-1}-j+1\bigg).
\end{align*}

Consequently,
\begin{align*}
\log D_j & \geqslant (pq)^{\frac{j-1}{2}} \bigg[\log D_{1}-\frac{2(pq)(p+1)}{(pq-1)^2}\log(pq)+\frac{\log \widetilde{C}}{pq-1}\bigg]+\frac{2(pq)(p+1)}{(pq-1)^2}\log(pq)\\ & \qquad +(j-1)\frac{(p+1)}{pq-1}\log(pq) -\frac{\log \widetilde{C}}{pq-1}.
\end{align*}
Thus, for an odd $j$ such that $j>\frac{\log \widetilde{C}}{(p+1)\log(pq)}-\frac{2(pq)}{pq-1}+1$, it holds
\begin{align} \label{lower bound log Dj}
\log D_j\geqslant (pq)^{\frac{j-1}{2}}\big(\log D_1-S_{p,q}(\infty)\big),
\end{align} where $S_{p,q}(\infty)\doteq \frac{2(pq)(p+1)}{(pq-1)^2}\log(pq)-\frac{\log \widetilde{C}}{pq-1}$.

In a similar way, one can show for an odd $j$ the validity of 
\begin{align*}
\log \Delta_j & \geqslant (pq)^{\frac{j-1}{2}} \bigg[\log \Delta_{1}-\frac{2(pq)(q+1)}{(pq-1)^2}\log(pq)+\frac{\log \widetilde{K}}{pq-1}\bigg]+\frac{2(pq)(q+1)}{(pq-1)^2}\log(pq)\\ & \qquad +(j-1)\frac{(q+1)}{pq-1}\log(pq) -\frac{\log \widetilde{K}}{pq-1},
\end{align*} and, then, for $j>\frac{\log \widetilde{K}}{(q+1)\log(pq)}-\frac{2(pq)}{pq-1}+1$ this yields 
\begin{align}\label{lower bound log Deltaj}
\log \Delta_j\geqslant (pq)^{\frac{j-1}{2}}\big(\log \Delta_1-\widetilde{S}_{p,q}(\infty)\big),
\end{align} where $\widetilde{S}_{p,q}(\infty)\doteq \frac{2(pq)(q+1)}{(pq-1)^2}\log(pq)-\frac{\log \widetilde{K}}{pq-1}$.\newline For the sake of brevity, we denote $j_0\doteq \big\lceil  \frac{1}{ \log(pq)} \max\{\frac{\widetilde{C}}{p+1},\frac{\widetilde{K}}{q+1}\}-\frac{2pq}{pq-1}+1\big\rceil$.

Let us combine now \eqref{lower bound U j} and \eqref{lower bound log Dj}. For an odd $j>j_0$ and $t\geqslant T_0$, using \eqref{explicit epression a j} and  \eqref{explicit epression b j and beta j}, we get
\begin{align*}
U(t) & \geqslant \exp\Big((pq)^{\frac{j-1}{2}}\big(\log D_1- S_{p,q}(\infty)\big)\Big) (1+t)^{-a_j}(t-T_0)^{b_j} \\
& = \exp\Big((pq)^{\frac{j-1}{2}}\big(\log D_1- S_{p,q}(\infty)\big)\Big) (1+t)^{-\big(\frac{A}{pq-1}+a_1\big)(pq)^{\frac{j-1}{2}} +\frac{A}{pq-1}}(t-T_0)^{\big(\frac{B}{pq-1}+b_1\big)(pq)^{\frac{j-1}{2}} -\frac{B}{pq-1}} \\
& = \exp\Big((pq)^{\frac{j-1}{2}}\big(\log D_1-\big(\tfrac{A}{pq-1}+a_1\big)\log(1+t)+\big(\tfrac{B}{pq-1}+b_1\big)\log(t-T_0)- S_{p,q}(\infty)\big)\Big) \\ & \qquad \quad\times (1+t)^{\frac{A}{pq-1}}(t-T_0)^{-\frac{B}{pq-1}}.
\end{align*}
Also, for $t\geqslant 2T_0+1$ from the previous estimate it follows
\begin{align}
U(t) & \geqslant \exp\Big((pq)^{\frac{j-1}{2}}J(t)\Big)  (1+t)^{\frac{A}{pq-1}}(t-T_0)^{-\frac{B}{pq-1}}, \label{lower bound U with J}
\end{align} where 
\begin{align*}
J(t) & \doteq \log D_1+\big(\tfrac{B-A}{pq-1}+b_1-a_1\big)\log(t-T_0)-\big(\tfrac{A}{pq-1}+a_1\big)\log 2- S_{p,q}(\infty) \\
& =  \log\Big( D_1(t-T_0)^{\tfrac{B-A}{pq-1}+b_1-a_1}\Big)-\big(\tfrac{A}{pq-1}+a_1\big)\log 2- S_{p,q}(\infty).
\end{align*}
Let us calculate more precisely the power of $(t-T_0)$ in the last line:
\begin{align*}
\frac{B-A}{pq-1}+b_1-a_1& = \frac{2p+2+n-npq}{pq-1}+n+1-(n+\mu_2-1)\frac{p}{2} \\ & =\frac{pq+1+2p}{pq-1}-(n+\mu_2-1)\frac{p}{2} = p\, \bigg(\frac{q+p^{-1}+2}{pq-1}- \frac{n+\mu_2-1}{2}\bigg).
\end{align*} So, $\frac{B-A}{pq-1}+b_1-a_1 >0$ if and only if $ F(n+\mu_2,q,p) >0$.

In an analogous way, from \eqref{lower bound V j}, \eqref{lower bound log Deltaj}, \eqref{explicit epression alpha j} and \eqref{explicit epression b j and beta j} we obtain for $t\geqslant 2T_0+1$ and for an odd $j>j_0$
\begin{align}
V(t) & \geqslant \exp\Big((pq)^{\frac{j-1}{2}}\widetilde{J}(t)\Big)  (1+t)^{\tfrac{\widetilde{A}}{pq-1}}(t-T_0)^{-\tfrac{\widetilde{B}}{pq-1}}, \label{lower bound V with Jtilde}
\end{align} where 
\begin{align*}
\widetilde{J}(t) & \doteq \log \Delta_1+\big(\tfrac{\widetilde{B}-\widetilde{A}}{pq-1}+\beta_1-\alpha_1\big)\log(t-T_0)-\big(\tfrac{\widetilde{A}}{pq-1}+\alpha_1\big)\log 2- \widetilde{S}_{p,q}(\infty) \\
& =  \log\Big( \Delta_1(t-T_0)^{\tfrac{\widetilde{B}-\widetilde{A}}{pq-1}+\beta_1-\alpha_1}\Big)-\big(\tfrac{\widetilde{A}}{pq-1}+\alpha_1\big)\log 2- \widetilde{S}_{p,q}(\infty).
\end{align*} In this case, $\tfrac{\widetilde{B}-\widetilde{A}}{pq-1}+\beta_1-\alpha_1=q\Big(\frac{p+2+q^{-1}}{pq-1}-\frac{n+\mu_1-1}{2}\Big)>0$ if and only if $F(n+\mu_1,p,q) >0$.

If $F(n+\mu_2,q,p)>0$, since $D_1= K_2 \varepsilon^p$, where $K_2\doteq  K_1(r_2+n+1)^{-1}(r_2+n+2)^{-1}$, then, $J(t)>0$ is equivalent to require
\begin{align*}
 t>T_0+E\varepsilon^{-F(n+\mu_2,q,p)^{-1}}, \qquad \mbox{where} \ \ E \doteq \left(e^{\big(\tfrac{A}{pq-1}+a_1\big)\log 2+ S_{p,q}(\infty)}K_2^{-1}\right)^{\frac{1}{p F(n+\mu_2,q,p)}}.
\end{align*} 
If we choose $\varepsilon_0>0$ sufficiently small so that $$2E\varepsilon_0^{-F(n+\mu_2,q,p)^{-1}}>2T_0+1,$$ then, for any $\varepsilon\in (0,\varepsilon_0]$ and $t>2E\varepsilon^{-F(n+\mu_2,q,p)^{-1}}$ we have $t> 2T_0+1$ and  $J(t)>0$. 
Thus, letting $j\to \infty$ in \eqref{lower bound U with J}, the lower bound for $U$ blows up and, hence, $U$ can be finite only for $t\leqslant 2E\varepsilon^{-F(n+\mu_2,q,p)^{-1}}$.

Analogously, in the case $F(n+\mu_1,p,q)>0$, as $\Delta_1= C_2 \varepsilon^q$, where $C_2\doteq  C_1(\rho_2+n+1)^{-1}(\rho_2+n+2)^{-1}$, we get that $\widetilde{J}(t)>0$ is equivalent to
\begin{align*}
t> T_0+\widetilde{E}\varepsilon^{-F(n+\mu_1,p,q)^{-1}}, \qquad \mbox{where} \ \ \widetilde{E} \doteq \bigg(e^{\big(\tfrac{\widetilde{A}}{pq-1}+\alpha_1\big)\log 2+ \widetilde{S}_{p,q}(\infty)}C_2^{-1}\bigg)^{\frac{1}{q F(n+\mu_1,p,q)}}.
\end{align*} 
Also,  in this case we may choose $\varepsilon_0>0$ sufficiently small so that $$2\widetilde{E}\varepsilon_0^{-F(n+\mu_1,p,q)^{-1}}>2T_0+1.$$ Consequently, for any $\varepsilon\in (0,\varepsilon_0]$ and $t>2\widetilde{E}\varepsilon^{-F(n+\mu_1,p,q)^{-1}}$ we have $t> 2T_0+1$ and  $\widetilde{J}(t)>0$ and, then, taking the limit as $j\to \infty$ in \eqref{lower bound V with Jtilde} the lower bound for $V(t)$  diverges. Hence, $V$ may be finite just for $t\leqslant2\widetilde{E}\varepsilon^{-F(n+\mu_1,p,q)^{-1}}$. 
Summarizing, we proved that if \eqref{critical exponent wave like case system} holds, then, $(u,v)$ blows up in finite time and \eqref{lifespan upper bound estimate} is satisfied. 
This completes the proof.

\section{Super-solutions of the scale-invariant wave equations and their properties} \label{Section supersol}

Henceforth we deal with the critical case and the proof of Theorem \ref{Thm critical case}.
In this section we introduce the notion of super-solutions of the Cauchy problem
\begin{align}\label{scale inv equation H}
\begin{cases}
u_{tt}-\Delta u +\frac{\mu}{1+t}u_t +\frac{\nu^2}{(1+t)^2}u = H,  & x\in \mathbb{R}^n, \ t\in (0,T),  \\
u(0,x)= \varepsilon u_0(x), & x\in \mathbb{R}^n,  \\
u_t(0,x)= \varepsilon u_1(x),& x\in \mathbb{R}^n, 
\end{cases}
\end{align}
and, then, we derive some estimates related to super-solutions.

\begin{defn}\label{def 3.1} Let $(u_0,u_1)\in H^1(\mathbb{R}^n)\times L^2(\mathbb{R}^n)$ be compactly supported and $H\in L^1_{\loc}([0,T)\times \mathbb{R}^n)$. We say that $u\in \mathcal{C}([0,T),H^1(\mathbb{R}^n))\cap \mathcal{C}^1([0,T),L^2(\mathbb{R}^n))$ is a super-solution of \eqref{scale inv equation H} on $[0,T)$ if
$u(0,x)=\varepsilon u_0(x)$ in $H^1(\mathbb{R}^n)$ 
and 
\begin{align}
&\varepsilon \int_{\mathbb{R}^n} u_1(x) \Psi(0,x) \, dx+\int_0^T \int_{\mathbb{R}^n} H(t,x) \Psi(t,x) \, dx \, dt \leqslant \int_0^T \int_{\mathbb{R}^n} -u_t(t,x) \Psi_t(t,x) \, dx \, dt \notag \\ & \quad + \int_0^T \int_{\mathbb{R}^n} \Big(\nabla u(t,x) \cdot \nabla \Psi(t,x)+\frac{\mu}{1+t}u_t(t,x)\Psi(t,x) +\frac{\nu^2}{(1+t)^2}u(t,x)\Psi(t,x) \Big)\, dx \, dt \label{integral relation supersol}
\end{align} for any nonnegative test function $\Psi\in \mathcal{C}_0^\infty([0,T)\times \mathbb{R}^n)$.
\end{defn}

\begin{lem}\label{lemma 3.3} Let $u$ be a super-solution of \eqref{scale inv equation H} with $(u_0,u_1)$ in the classical energy space $H^1(\mathbb{R}^n)\times L^2(\mathbb{R}^n)$ and compactly supported in $B_{r_0}$, $H$ locally summable and $\supp u\subset \{(t,x)\in [0,T)\times\mathbb{R}^n:|x|\leqslant r_0+t\}$. Then, it holds
\begin{align}
& \varepsilon \int_{\mathbb{R}^n} \big( u_1(x) \Psi(0,x)+u_0(x) (\mu\Psi(0,x)-\Psi_t(0,x) \big) \, dx+\int_0^T \int_{\mathbb{R}^n} H(t,x) \Psi(t,x) \, dx \, dt \notag\\  & \quad  \leqslant \int_0^T \int_{\mathbb{R}^n} u(t,x)\Big( \Psi_{tt}(t,x) -\Delta \Psi(t,x)-\frac{\mu}{1+t}\Psi_t(t,x) +\frac{\mu+\nu^2}{(1+t)^2}\Psi(t,x) \Big)\, dx \, dt 
\label{integral relation supersol, 2 deriv t}
\end{align} for any nonnegative test function $\Psi\in \mathcal{C}_0^\infty([0,T)\times \mathbb{R}^n)$.
\end{lem}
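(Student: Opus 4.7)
The strategy is to start from the weak integral inequality \eqref{integral relation supersol} granted by Definition \ref{def 3.1} and to shift, by two integrations by parts, all the derivatives that act on $u$ onto the test function $\Psi$. The only regularity available on $u$ is $u\in \mathcal{C}([0,T),H^1(\mathbb{R}^n))\cap \mathcal{C}^1([0,T),L^2(\mathbb{R}^n))$, so $u_{tt}$ is not defined classically; however, it never needs to be, because all the manipulations proceed by transferring derivatives onto $\Psi\in \mathcal{C}_0^\infty([0,T)\times \mathbb{R}^n)$.

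The plan is as follows. First, I treat the kinetic term $-\int_0^T\int_{\mathbb{R}^n}u_t\Psi_t\,dx\,dt$. By the $\mathcal{C}^1([0,T),L^2)$ regularity of $u$ and the smoothness of $\Psi$, the scalar function $t\mapsto \int_{\mathbb{R}^n}u(t,x)\Psi_t(t,x)\,dx$ is $\mathcal{C}^1$ with derivative $\int_{\mathbb{R}^n}(u_t\Psi_t+u\Psi_{tt})\,dx$. Integrating this identity from $0$ to $T$, using that $\Psi_t(T,\cdot)\equiv 0$ by the compact support of $\Psi$ in time, and recalling that $u(0,x)=\varepsilon u_0(x)$ in $H^1$, one obtains
\begin{align*}
-\int_0^T\!\!\int_{\mathbb{R}^n}u_t\Psi_t\,dx\,dt = \varepsilon\int_{\mathbb{R}^n}u_0(x)\Psi_t(0,x)\,dx+\int_0^T\!\!\int_{\mathbb{R}^n}u\,\Psi_{tt}\,dx\,dt.
\end{align*}
An analogous one–step integration by parts in $t$ on the damping term $\int_0^T\int_{\mathbb{R}^n}\frac{\mu}{1+t}u_t\Psi\,dx\,dt$, now with $t\mapsto \frac{\mu}{1+t}u(t,x)\Psi(t,x)$, produces the boundary contribution $-\varepsilon\mu\int_{\mathbb{R}^n}u_0(x)\Psi(0,x)\,dx$ and an interior term $\int_0^T\int_{\mathbb{R}^n}u\left(\frac{\mu}{(1+t)^2}\Psi-\frac{\mu}{1+t}\Psi_t\right)dx\,dt$, where the derivative of $\frac{\mu}{1+t}$ generates precisely the extra mass-like coefficient seen in \eqref{integral relation supersol, 2 deriv t}.

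For the elastic term $\int_0^T\int_{\mathbb{R}^n}\nabla u\cdot\nabla\Psi\,dx\,dt$, integration by parts in the space variable gives $-\int_0^T\int_{\mathbb{R}^n}u\,\Delta\Psi\,dx\,dt$: no boundary contribution appears because, by the finite-speed-of-propagation hypothesis $\supp u(t,\cdot)\subset B_{r_0+t}$, the integrand is compactly supported in $x$ for each $t$. The remaining mass term $\int_0^T\int_{\mathbb{R}^n}\frac{\nu^2}{(1+t)^2}u\Psi\,dx\,dt$ is already in the desired form. Summing the four contributions and adding the two boundary terms at $t=0$ to the left-hand side of \eqref{integral relation supersol} yields exactly \eqref{integral relation supersol, 2 deriv t}, with the coefficient $\frac{\mu+\nu^2}{(1+t)^2}$ arising from the addition $\frac{\mu}{(1+t)^2}+\frac{\nu^2}{(1+t)^2}$.

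The only delicate point is the justification of the time integrations by parts under the limited regularity of $u$; this is overcome by differentiating the scalar pairings $t\mapsto \int_{\mathbb{R}^n}u(t,x)\Psi_t(t,x)\,dx$ and $t\mapsto \frac{\mu}{1+t}\int_{\mathbb{R}^n}u(t,x)\Psi(t,x)\,dx$ instead of $u$ itself, which is legitimate by the product rule in $\mathcal{C}^1([0,T),L^2)\times \mathcal{C}_0^\infty$. Everything else is an algebraic rearrangement, so no further obstruction is expected.
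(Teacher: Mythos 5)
Your proposal is correct and follows essentially the same route as the paper: both proofs take the weak inequality of Definition \ref{def 3.1} and transfer one time derivative (from the kinetic and damping terms) and one spatial derivative (from the elastic term) onto $\Psi$, picking up the boundary contributions at $t=0$ and using that $\Psi$ vanishes near $t=T$; your signs and the resulting coefficient $\frac{\mu+\nu^2}{(1+t)^2}$ all check out. The only difference is cosmetic — you justify the time integrations by parts by differentiating the scalar pairings $t\mapsto\int u\,\Psi_t\,dx$ and $t\mapsto\frac{\mu}{1+t}\int u\,\Psi\,dx$, which is a slightly more explicit version of the single displayed computation in the paper.
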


\begin{proof} 
Using the support condition for $u$, integration by parts provides 
\begin{align*}
& \int_0^T \int_{\mathbb{R}^n} \Big( -u_t(t,x) \Psi_t(t,x) +\nabla u(t,x) \cdot \nabla \Psi(t,x)+\frac{\mu}{1+t}u_t(t,x)\Psi(t,x) \Big)\, dx \, dt \\ 
& \quad = \int_0^T \frac{d}{dt} \bigg[\int_{\mathbb{R}^n} (-u(t,x) \Psi_t(t,x)+\frac{\mu}{1+t}u(t,x)\Psi(t,x)) \, dx \bigg] dt \\ & \qquad + \int_0^T \int_{\mathbb{R}^n} u(t,x)\Big( \Psi_{tt}(t,x) -\Delta \Psi(t,x)-\partial_t \Big(\frac{\mu}{1+t}\Psi(t,x)\Big) \Big)\, dx \, dt \\
& \quad = \varepsilon \int_{\mathbb{R}^n} u_0(x) \big( \Psi_t(0,x)-\mu \Psi(0,x)\big) \, dx   + \int_0^T \int_{\mathbb{R}^n} u(t,x)\Big( \Psi_{tt}(t,x) -\Delta \Psi(t,x)-\partial_t \Big(\frac{\mu}{1+t}\Psi(t,x)\Big) \Big)\, dx \, dt.
\end{align*} Substituting this relation in \eqref{integral relation supersol}, we get \eqref{integral relation supersol, 2 deriv t}. This concludes the proof.
\end{proof}

In the next result, we will employ the following solution of the adjoint equation to the homogeneous linear equation related to \eqref{scale inv equation H}, which is a particular solution among the self-similar solutions that we will introduce in Section \ref{Section self similar sol}:
\begin{align}
V(t,x)& \doteq (1+t)^{\frac{\mu+1+\sqrt{\delta}}{2}}((1+t)^2-|x|^2)^{-\frac{n+\sqrt{\delta}}{2}} \notag\\ &=(1+t)^{-n+\frac{\mu+1-\sqrt{\delta}}{2}}\bigg(1-\frac{|x|^2}{(1+t)^2}\bigg)^{-\frac{n+\sqrt{\delta}}{2}} \qquad  \mbox{for} \ \ (t,x)\in Q, \label{def V}
\end{align} where $Q \doteq \{(t,x)\in [0,T)\times\mathbb{R}^n:|x|\leqslant 1+t\}$.

Moreover, we introduce a parameter dependent bump function. Let $\psi\in \mathcal{C}^\infty_0([0,\infty))$ be a nonincreasing function such that $\psi=1$ on $[0,\frac{1}{2}]$ and $\supp \psi \subset [0,1)$. Besides, we denote $$\psi^*(t)=\begin{cases} 0 & \mbox{if} \ \ t\in[0,\frac{1}{2}), \\ \psi(t) & \mbox{if} \ \ t\in[\frac{1}{2},\infty). \end{cases}$$ Clearly, $\psi^*$ is not smooth. We will use this bounded function only to keep trace of the support property of derivatives of $\psi$. More precisely, if $\psi_R(t)\doteq \psi(\tfrac{t}{R}),\psi_R^*(t)\doteq \psi^*(\tfrac{t}{R})$ for any $t\geqslant 0$ with $R>0$, then, the following estimates hold (see, for example Lemma 3.1 in \cite{ISW18})
\begin{align}\label{estimate psi' R}
| \partial_t \psi_R(t)| \lesssim R^{-1} [\psi_R^*(t)]^{1-\frac{1}{k}} \qquad \mbox{for any} \ \ k\geqslant1, \\
| \partial_t^2 \psi_R(t)| \lesssim R^{-2} [\psi_R^*(t)]^{1-\frac{2}{k}} \qquad \mbox{for any} \ \ k\geqslant 2. \label{estimate psi'' R}
\end{align}

Now we can prove a lower bound estimate, which is somehow related to \eqref{Priori u^q} and \eqref{Priori v^p}.

\begin{lem}\label{lemma 3.4} Let $u_0\in H^1(\mathbb{R}^n)$ and $u_1\in L^2(\mathbb{R}^n)$ be nonnegative functions such that $\supp u_0,\supp u_1 \subset B_{r_0}$ with $r_0\in (0,1)$.
Let $u$ be a super-solution of 
\begin{align} \label{scale inv equation homogeneous case}
\begin{cases}
u_{tt}-\Delta u +\frac{\mu}{1+t}u_t +\frac{\nu^2}{(1+t)^2}u = 0,  & x\in \mathbb{R}^n, \ t\in (0,T),  \\
u(0,x)= \varepsilon u_0(x), & x\in \mathbb{R}^n,  \\
u_t(0,x)= \varepsilon u_1(x),& x\in \mathbb{R}^n, 
\end{cases}
\end{align} such that $\supp u\subset Q_{r_0}\doteq \{(t,x)\in [0,T)\times\mathbb{R}^n:|x|\leqslant r_0+t\}$. 
Then, for any $p>1$ and any $R\in (1,T)$ it holds 
\begin{align}\label{lower bound int u^p psi*R}
\big(I_{\mu,\nu^2}[u_0,u_1]\varepsilon\big)^p R^{n-\frac{n+\mu-1}{2}p} \lesssim \int_0^T \int_{\mathbb{R}^n}|u(t,x)|^p \psi_R^*(t) \,dx \, dt,
\end{align} where the multiplicative constant in \eqref{lower bound int u^p psi*R} is independent of $\varepsilon$ and $R$ and $$I_{\mu,\nu^2}[u_0,u_1] \doteq \int_{\mathbb{R}^n}\Big(u_1(x)V(0,x)+u_0(x)\big(\mu V(0,x)-V_t(0,x)\big)\Big)\, dx.$$
\end{lem}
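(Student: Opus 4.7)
The strategy is a test-function argument using $V$ as a weight, cut off in time by $\psi_R$. I choose the test function
\[
\Psi(t,x) := V(t,x)\,\psi_R(t),
\]
and substitute it into the integral identity \eqref{integral relation supersol, 2 deriv t} from Lemma \ref{lemma 3.3} (with $H\equiv 0$). Because $R>1$ and $\psi\equiv 1$ on $[0,1/2]$, one has $\psi_R(0)=1$ and $(\psi_R)_t(0)=0$, so the left-hand side collapses to $\varepsilon\,I_{\mu,\nu^2}[u_0,u_1]$. On the right-hand side, expanding the adjoint operator on $V\psi_R$ and using that $V$ solves the adjoint homogeneous equation (this will be verified in the next section, but can be taken here as a given since $V$ is among the self-similar family to be introduced) all terms without a derivative of $\psi_R$ cancel, leaving
\[
\varepsilon\,I_{\mu,\nu^2}[u_0,u_1]\;\leqslant\;\int_0^T\!\!\int_{\mathbb{R}^n} u(t,x)\left(2V_t\,\psi_R'(t)+V\,\psi_R''(t)-\tfrac{\mu}{1+t}V\,\psi_R'(t)\right)dx\,dt.
\]

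Next I apply H\"older's inequality with exponents $p$ and $p'$, inserting the weight $(\psi_R^*)^{1/p}\cdot(\psi_R^*)^{-1/p}$; this is legitimate because both $\psi_R'$ and $\psi_R''$ are supported in $[R/2,R]$, where $\psi_R^*>0$. This gives
\[
\bigl(\varepsilon\,I_{\mu,\nu^2}[u_0,u_1]\bigr)^p\;\lesssim\;\Bigl(\int_0^T\!\!\int_{\mathbb{R}^n}|u|^p\,\psi_R^*\,dx\,dt\Bigr)\,\mathcal{I}_R^{\,p-1},
\]
where
\[
\mathcal{I}_R:=\int_{R/2}^{R}\!\!\int_{|x|\leqslant r_0+t}\!\Bigl(|V_t||\psi_R'|+|V||\psi_R''|+\tfrac{\mu}{1+t}|V||\psi_R'|\Bigr)^{p'}(\psi_R^*)^{-p'/p}\,dx\,dt .
\]
Invoking \eqref{estimate psi' R}--\eqref{estimate psi'' R} with $k=2p'$, the weights behave like $|\psi_R'|^{p'}(\psi_R^*)^{-p'/p}\lesssim R^{-p'}(\psi_R^*)^{1/2}$ and $|\psi_R''|^{p'}(\psi_R^*)^{-p'/p}\lesssim R^{-2p'}$, and on $[R/2,R]$ we have $(1+t)\sim R$, so all three contributions are dominated by $R^{-p'}\!\int V^{p'}$ after one observes the pointwise bound $|V_t|\lesssim (1-r_0)^{-1}|V|$ on $\mathrm{supp}\,u$.

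The key computation is therefore the evaluation of $\int V^{p'}$ on the slab $[R/2,R]\times\{|x|\leqslant r_0+t\}$, and this is the step I expect to be the most delicate. The factor $((1+t)^2-|x|^2)^{-(n+\sqrt{\delta})/2}$ becomes singular as one approaches the light cone, but on $\mathrm{supp}\,u$ the distance to that cone is bounded below by $1-r_0>0$; more precisely $(1+t)^2-|x|^2\geqslant (1-r_0)(1+t)$. Splitting the radial integral on the $x$-side and using the substitution $s=|x|/(1+t)$ to isolate the near-boundary contribution, a direct computation shows
\[
\int_{|x|\leqslant r_0+t}V^{p'}(t,x)\,dx\;\lesssim\;(1+t)^{n-1-p'(n-\mu-1)/2},
\]
so that after integrating in $t\in[R/2,R]$ and including the $R^{-p'}$ factor one obtains $\mathcal{I}_R\lesssim R^{\,n-p'(n-\mu+1)/2}$. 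Raising to the power $p-1$ and using the identity $(p-1)\bigl(n-\tfrac{p'(n-\mu+1)}{2}\bigr)= -\bigl(n-\tfrac{(n+\mu-1)p}{2}\bigr)$ (equivalent to $np/p'=n(p-1)$), the desired lower bound \eqref{lower bound int u^p psi*R} follows immediately.
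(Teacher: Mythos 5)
Your argument is, in outline, the same test-function computation as the paper's: the choice $\Psi=\psi_R V$ (the paper additionally inserts a spatial cutoff $\chi$ so that $\Psi$ is an admissible test function), the cancellation of all terms not carrying a derivative of $\psi_R$ because $V$ solves the adjoint equation, the H\"older step with the weights $(\psi_R^*)^{1/p}(\psi_R^*)^{-1/p}$ justified by \eqref{estimate psi' R}--\eqref{estimate psi'' R}, and the final algebra $(p-1)p'=p$ all coincide with the paper. The one point where you genuinely deviate is the treatment of $V_t$: you replace $|V_t|$ by $(1-r_0)^{-1}|V|$ (a correct pointwise bound on $\supp u$, since $(1+t)^2-|x|^2\geqslant (1-r_0)(1+t)$ there) and reduce everything to estimating $\int V^{p'}$. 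This is where a gap appears.

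Your slice estimate $\int_{|x|\leqslant r_0+t}V^{p'}\,dx\lesssim (1+t)^{n-1-\frac{(n-\mu-1)p'}{2}}$ presumes that the radial integral $\int_0^{1-\epsilon}(1-s)^{-\frac{(n+\sqrt{\delta})p'}{2}}s^{n-1}\,ds$, with $\epsilon\approx (1+t)^{-1}$, is governed by the endpoint singularity and is $\approx \epsilon^{1-\frac{(n+\sqrt{\delta})p'}{2}}$; this requires $\frac{(n+\sqrt{\delta})p'}{2}>1$. That holds automatically for $n\geqslant 2$, but fails for $n=1$ when $\sqrt{\delta}<1$ and $p\geqslant 2/(1-\sqrt{\delta})$: there the radial integral is $O(1)$ (or $O(\log(1+t))$ at the borderline), and $\int V^{p'}$ exceeds your claimed bound by the factor $(1+t)^{1-\frac{(n+\sqrt{\delta})p'}{2}}$. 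Concretely, for $n=1$, $\mu=1$, $\nu=0$, $p=4$ one has $V=(1+t)((1+t)^2-x^2)^{-1/2}$ and $\int_{|x|\leqslant r_0+t}V^{4/3}\,dx\approx 1+t$, not $(1+t)^{2/3}$; then $\mathcal{I}_R\approx R^{2/3}$ rather than $R^{1/3}$, and your chain yields only $\int|u|^p\psi_R^*\gtrsim \varepsilon^p R^{-2}$ instead of the asserted $\varepsilon^p R^{-1}$. The loss comes precisely from discarding the extra factor $\big(1-\tfrac{|x|}{1+t}\big)^{-1}$ carried by $V_t$ relative to $(1+t)^{-1}V$: the paper keeps $|V_t|\lesssim R^{-n+\frac{\mu-1-\sqrt{\delta}}{2}}\big(1-\tfrac{|x|}{1+t}\big)^{-\frac{n+\sqrt{\delta}}{2}-1}$, for which the relevant exponent $\big(\tfrac{n+\sqrt{\delta}}{2}+1\big)p'$ always exceeds $1$, and notes that the $|V|/R^2$ term is harmless because of its extra $R^{-1}$. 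Estimating $|V|/R^2$ and $|V_t|/R$ separately as in the paper (or adding the case distinction on $\tfrac{(n+\sqrt{\delta})p'}{2}$) closes the gap; for $n\geqslant 2$ your computation is correct as written.
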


\begin{rem} In the previous statement the nonnegativity of $u_0,u_1$ can be relaxed by requiring simply that $u_0,u_1$ satisfy $I_{\mu,\nu^2}[u_0,u_1] >0$.
\end{rem}

\begin{proof}
Let us consider $\Psi(t,x)=\psi_R(t)V(t,x)\chi (x)$, where $\chi\in \mathcal{C}^\infty_0(\mathbb{R}^n)$ satisfies $\chi=1$ on $B_{r_0+T}$. Applying \eqref{integral relation supersol, 2 deriv t} to this $\Psi$, we get
\begin{align*}
& \varepsilon \int_{\mathbb{R}^n} \big( u_1(x) \psi_R(0) V(0,x)+u_0(x) (\mu\psi_R(0)V(0,x)-\psi'_R(0)V(0,x)-\psi_R(0)V_t(0,x) \big) \, dx  \\  &  \leqslant \int_0^T \int_{\mathbb{R}^n} u(t,x)\Big( \partial_t^2 (\psi_R(t)V(t,x))-\Delta (\psi_R(t)V(t,x))-\tfrac{\mu}{1+t}\partial_t (\psi_R(t)V(t,x)) +\tfrac{\mu+\nu^2}{(1+t)^2}\psi_R(t)V(t,x) \Big)\, dx \, dt 
\end{align*} and, then,
\begin{align*}
 \varepsilon \int_{\mathbb{R}^n} \big( u_1(x)& V(0,x)  +u_0(x) (\mu V(0,x)-V_t(0,x) \big) \, dx = \varepsilon \, I_{\mu,\nu^2}[u_0,u_1] \\  &  \leqslant \int_0^T \int_{\mathbb{R}^n} u(t,x)\Big( \partial_t^2 \psi_R(t)V(t,x)+2\partial_t \psi_R(t)V_t(t,x)-\tfrac{\mu}{1+t}\partial_t\psi_R(t)V(t,x) \Big)\, dx \, dt  \\
& \qquad + \int_0^T \int_{\mathbb{R}^n} u(t,x)\psi_R(t)\Big(V_{tt}(t,x)-\Delta V(t,x)-\tfrac{\mu}{1+t} V_t(t,x) +\tfrac{\mu+\nu^2}{(1+t)^2}V(t,x) \Big)\, dx \, dt 
\\  &  = \int_0^T \int_{\mathbb{R}^n} u(t,x)\Big( \partial_t^2 \psi_R(t)V(t,x)+2\partial_t \psi_R(t)V_t(t,x)-\tfrac{\mu}{1+t}\partial_t\psi_R(t)V(t,x) \Big)\, dx \, dt ,
\end{align*} where in last step we used the fact that $V$ solves the adjoint equation of the homogeneous wave equation with scale-invariant damping and mass.

Let us remark that
\begin{align*}
V_t(t,x)=-(1+t)^{-n-\frac{\mu-1-\sqrt{\delta}}{2}}\Big(1-\tfrac{|x|^2}{(1+t)^2}\Big)^{-\frac{n+\sqrt{\delta}}{2}-1}\Big(n+\tfrac{\sqrt{\delta}-(\mu+1)}{2}+\tfrac{\mu+1+\sqrt{\delta}}{2}\tfrac{|x|^2}{(1+t)^2}\Big),
\end{align*} so that
\begin{align*}
\mu V(0,x)-V_t(0,x)= (1-|x|^2)^{-\frac{n+\sqrt{\delta}}{2}-1}\Big(n+\tfrac{\mu-1+\sqrt{\delta}}{2}+\tfrac{1-\mu+\sqrt{\delta}}{2}|x|^2\Big)\geqslant 0 \qquad \mbox{ for any} \ \ x\in B_{r_0}.
\end{align*} In particular, for nonnegative and nontrivial $u_0,u_1$ the last estimate yields $I_{\mu,\nu^2}[u_0,u_1] >0$. 

If we employ now \eqref{estimate psi' R} and \eqref{estimate psi'' R} for $k=p'$ and $k=2p'$, respectively, then, we arrive at
\begin{align*}
\varepsilon \, I_{\mu,\nu^2}[u_0,u_1] &\lesssim  \int_0^T \int_{\mathbb{R}^n} |u(t,x)|\Big( \tfrac{|V(t,x)|}{R^2}+\tfrac{|V_t(t,x)|}{R}+\tfrac{\mu}{1+t}\tfrac{|V(t,x)|}{R} \Big) [\psi^*_R(t)]^{\frac{1}{p}}\, dx \, dt \\
&  \lesssim  \int_0^T \int_{\mathbb{R}^n} |u(t,x)|\Big( \tfrac{|V(t,x)|}{R^2}+\tfrac{|V_t(t,x)|}{R}\Big) [\psi^*_R(t)]^{\frac{1}{p}}\, dx \, dt \\
&  \lesssim  \bigg(\int_0^T \int_{\mathbb{R}^n} |u(t,x)|^p\, \psi^*_R(t)\, dx \, dt \bigg)^{\frac{1}{p}} \bigg(\int_{\frac{R}{2}}^R \int_{B_{r_0+t}}\Big( \tfrac{|V(t,x)|}{R^2}+\tfrac{|V_t(t,x)|}{R}\Big)^{p'} \, dx \, dt \bigg)^{\frac{1}{p'}}.
\end{align*}
For $t\in[\frac{R}{2},R]$ and $|x|\leqslant r_0+t$ it holds
\begin{align*}
|V(t,x)| & \lesssim R^{-n+\frac{\mu+1-\sqrt{\delta}}{2}}\Big(1-\tfrac{|x|}{1+t}\Big)^{-\frac{n+\sqrt{\delta}}{2}}, \qquad
|V_t(t,x)|  \lesssim R^{-n+\frac{\mu-1-\sqrt{\delta}}{2}}\Big(1-\tfrac{|x|}{1+t}\Big)^{-\frac{n+\sqrt{\delta}}{2}-1}.
\end{align*} Therefore,
\begin{align}
\varepsilon  I_{\mu,\nu^2}[u_0,u_1] & \lesssim R^{-n-2+\frac{\mu+1-\sqrt{\delta}}{2}}\bigg(\int_0^T \! \int_{\mathbb{R}^n} \! |u(t,x)|^p\, \psi^*_R(t)\, dx \, dt \bigg)^{\frac{1}{p}} \bigg(\int_{\frac{R}{2}}^R \!\int_{B_{r_0+t}}\!\!\Big(1-\tfrac{|x|}{1+t}\Big)^{-\big(\frac{n+\sqrt{\delta}}{2}+1\big)p'} dx \, dt \bigg)^{\frac{1}{p'}} \notag \\
& \lesssim R^{-\frac{n}{p}+\frac{n+\mu-1}{2}}\bigg(\int_0^T \! \int_{\mathbb{R}^n} \! |u(t,x)|^p\, \psi^*_R(t)\, dx \, dt \bigg)^{\frac{1}{p}}, \label{intermediate int}
\end{align} where in the second inequality we used
\begin{align*}
\int_{\frac{R}{2}}^R \!\int_{B_{r_0+t}}\!\!\Big(1-\tfrac{|x|}{1+t}\Big)^{-\big(\frac{n+\sqrt{\delta}}{2}+1\big)p'} dx \, dt & \lesssim  \int_{\frac{R}{2}}^R (1+t)^{\big(\frac{n+\sqrt{\delta}}{2}+1\big)p'}(r_0+t)^{n-1}\!\int_0^{r_0+t}\!(1+t-r)^{-\big(\frac{n+\sqrt{\delta}}{2}+1\big)p'} dr \, dt \\
& \lesssim  \int_{\frac{R}{2}}^R (1+t)^{\big(\frac{n+\sqrt{\delta}}{2}+1\big)p'}(r_0+t)^{n-1} \, dt \lesssim R^{n+\big(\frac{n+\sqrt{\delta}}{2}+1\big)p'}.
\end{align*}
From \eqref{intermediate int} it follows easily \eqref{lower bound int u^p psi*R}. The proof is complete.
\end{proof}

\section{Self-similar solutions related to Gauss hypergeometric functions} \label{Section self similar sol}

In the critical case of blow-up phenomena for semilinear wave equations with scale-invariant damping and mass, it is important to have a precise description of the behavior of solutions to the adjoint equation to the corresponding linear homogeneous equation. According to this purpose, in this section we will introduce a family of self-similar solutions to this equation, that can be represented by using Gauss hypergeometric functions (see also \cite{Zhou07,ZH14,IS17s,IS17,ISW18,PT18}). In particular, we refer to 
  \cite[Section 4]{PT18} for the proofs of results which are not proved here.
  
Hence, our goal is to provide a family of solutions on $Q$ to the adjoint equation
\begin{align}\label{adjoint equation}
\partial_t^2 \Phi -\Delta \Phi -\partial_t \left(\frac{\mu }{1+t} \, \Phi\right)+ \frac{\nu^2}{(1+t)^2}\,\Phi =0.
\end{align}
Let $\beta$ be a real parameter. If we make the following ansatz:
\begin{align*}
\Phi_\beta(t,x)\doteq (1+t)^{-\beta+1} \phi_{\beta}\bigg(\frac{|x|^2}{(1+t)^2}\bigg),
\end{align*} where $\psi_\beta\in \mathcal{C}^2([0,1))$, then, $\Phi_\beta$ solves \eqref{adjoint equation} if and only if $\phi_\beta$ solves
\begin{align}\label{phi equation}
z(1-z) \phi''_\beta (z)+\big(\tfrac{n}{2}-\big(\beta+\tfrac{\mu+1}{2}\big)z\big)\phi'_\beta(z)-\tfrac{1}{4}\big(\beta(\beta+\mu-1)+\nu^2\big)\, \phi_\beta(z)=0.
\end{align}
  
Choosing 
\begin{align*}
a_\beta & =a_\beta(\mu,\nu^2)\doteq \tfrac{\beta}{2}+\tfrac{\mu-1}{4}+\tfrac{\sqrt{\delta}}{4}  \qquad \mbox{and} \qquad
b_\beta  =b_\beta(\mu,\nu^2)\doteq  \tfrac{\beta}{2}+\tfrac{\mu-1}{4}-\tfrac{\sqrt{\delta}}{4} ,
\end{align*} we have 
\begin{align*}
a_\beta+b_\beta+1=\beta+\tfrac{\mu+1}{2} \qquad \mbox{and} \qquad a_\beta b_\beta =\tfrac{1}{4}\big(\beta(\beta+\mu-1)+\nu^2\big).
\end{align*} Therefore, \eqref{phi equation} coincides with the hypergeometric equation with parameters  $(a_\beta,b_\beta; \tfrac{n}{2})$, namely,
\begin{align*}
z(1-z) \phi''_\beta (z)+\big(\tfrac{n}{2}-\big(a_\beta+b_\beta+1\big)z\big)\phi'_\beta(z)-a_\beta b_\beta \, \phi_\beta(z)=0.
\end{align*}
Also, we may choose $\phi_\beta$ as the Gauss hypergeometric function
\begin{align*}
\phi_{\beta}(z)\doteq  \mathsf{F}(a_\beta,b_\beta;\tfrac{n}{2};z) = \sum_{k=0}^\infty \frac{(a_\beta)_k \, (b_\beta)_k}{(n/2)_k} \frac{z^k}{k!} \qquad |z|<1,
\end{align*} where $(m)_k$ denotes Pochhammer's symbol, which is  defined by $$(m)_k\doteq \begin{cases} 1 & \mbox{if} \ \ k=0, \\ \prod_{j=1}^k (m+j-1)  & \mbox{if} \ \ k>0.\end{cases}$$

\begin{defn} Let $\beta$ a real parameter such that $\beta>\frac{\sqrt{\delta}+1-\mu}{2}$. Then, we define 
\begin{align}
\Phi_{\beta,\mu,\nu^2}(t,x) & \doteq (1+t)^{-\beta+1} \phi_\beta \Big(\tfrac{|x|^2}{(1+t)^2}\Big) \notag \\ & = (1+t)^{-\beta+1} \mathsf{F}\Big(a_\beta (\mu,\nu^2)\, ,b_\beta (\mu,\nu^2)\, ;\tfrac{n}{2}\, ;\tfrac{|x|^2}{(1+t)^2}\Big) \qquad \mbox{for} \ \ (t,x)\in Q. \label{def Phi beta}
\end{align}
\end{defn}

 According to the construction we explained until now in this section, it is clear that $\{\Phi_{\beta,\mu,\nu^2}\}_{\beta}$  is a family of solutions to \eqref{adjoint equation}.
 In the next lemma, we discuss some properties of this family of self-similar solutions.
 
\begin{lem} \label{lemma 3.6} The function $\Phi_{\beta,\mu,\nu^2}$ satisfies the following properties:
\begin{enumerate}
\item[\rm{(i)}] $\Phi_{\beta,\mu,\nu^2}$ is a solution of \eqref{adjoint hom system} on $Q$.
\item[\rm{(ii)}] $|\partial_t \Phi_{\beta,\mu,\nu^2} | \lesssim  \Phi_{\beta+1,\mu,\nu^2}$ on $Q$. 
\item[\rm{(iii)}] If $\beta\in \big(\frac{\sqrt{\delta}+1-\mu}{2},\frac{n+1-\mu}{2}\big)$, then, $$\Phi_{\beta,\mu,\nu^2}(t,x) \approx (1+t)^{-\beta+1}$$ for any $ (t,x)\in Q$.
\item[\rm{(iv)}] If $\beta>\frac{n+1-\mu}{2}$, then, $$\Phi_{\beta,\mu,\nu^2}(t,x) \approx (1+t)^{-\beta+1}\Big(1-\tfrac{|x|^2}{(1+t)^2}\Big)^{\frac{n-\mu+1}{2}-\beta} $$ for any  $ (t,x)\in Q$.
\end{enumerate}
\end{lem}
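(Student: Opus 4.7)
The statement splits into four claims, which I would handle sequentially using classical properties of the Gauss hypergeometric function $\mathsf{F}$.

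For (i), the ansatz in \eqref{def Phi beta} was engineered so that the substitution $\Phi_{\beta,\mu,\nu^2}(t,x)=(1+t)^{-\beta+1}\phi_\beta(z)$ with $z=|x|^2/(1+t)^2$ converts the adjoint equation \eqref{adjoint equation} into the ODE \eqref{phi equation}. The indices $a_\beta, b_\beta$ were calibrated so that \eqref{phi equation} is precisely the hypergeometric equation with parameters $(a_\beta, b_\beta; n/2)$, and since $\phi_\beta = \mathsf{F}(a_\beta, b_\beta; n/2;\cdot)$ by construction solves that ODE on the disc $\{|z|<1\}$, which corresponds exactly to $Q$, the claim follows by direct verification.

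For (ii), the chain rule gives
\begin{align*}
\partial_t \Phi_{\beta,\mu,\nu^2}(t,x) = (1+t)^{-\beta}\bigl[(1-\beta)\phi_\beta(z) - 2z\, \phi'_\beta(z)\bigr],
\end{align*}
and since $\Phi_{\beta+1,\mu,\nu^2}=(1+t)^{-\beta}\phi_{\beta+1}(z)$ it suffices to bound each of $\phi_\beta(z)$ and $z\phi'_\beta(z)$ by a constant multiple of $\phi_{\beta+1}(z)$ on $[0,1)$. The standing hypothesis $\beta>(\sqrt{\delta}+1-\mu)/2$ gives $a_\beta,b_\beta>0$, so $\phi_\beta$ has positive series coefficients; together with $a_{\beta+1}=a_\beta+\tfrac12$ and $b_{\beta+1}=b_\beta+\tfrac12$ this yields $(a_{\beta+1})_k(b_{\beta+1})_k\geq (a_\beta)_k(b_\beta)_k$ termwise, hence $\phi_\beta\leq \phi_{\beta+1}$. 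The second bound I would obtain by combining the classical derivative formula $\phi'_\beta(z)=\tfrac{2a_\beta b_\beta}{n}\mathsf{F}(a_\beta+1,b_\beta+1;\tfrac{n}{2}+1;z)$ with a Gauss contiguous relation rewriting $z\phi'_\beta(z)$ as a nonnegative combination dominated termwise by $\phi_{\beta+1}$.

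Parts (iii) and (iv) are governed by the sign of $\gamma:=\tfrac{n}{2}-a_\beta-b_\beta=\tfrac{n+1-\mu}{2}-\beta$. Under (iii) one has $\gamma>0$ and $a_\beta,b_\beta>0$; positivity of all coefficients forces $\phi_\beta(z)\geq\phi_\beta(0)=1$ on $[0,1)$, while Gauss's summation theorem delivers the finite positive value $\phi_\beta(1^-)=\Gamma(\tfrac{n}{2})\Gamma(\gamma)/(\Gamma(\tfrac{n}{2}-a_\beta)\Gamma(\tfrac{n}{2}-b_\beta))$, so $\phi_\beta\approx 1$ uniformly. Under (iv), I would invoke Euler's transformation
\begin{align*}
\phi_\beta(z) = (1-z)^{\gamma}\,\mathsf{F}\bigl(\tfrac{n}{2}-a_\beta,\,\tfrac{n}{2}-b_\beta;\,\tfrac{n}{2};\, z\bigr),
\end{align*}
for which the new indices satisfy $c-a'-b'=\beta-\tfrac{n+1-\mu}{2}>0$, so Gauss's theorem again produces a finite nonzero limit at $z=1$; together with the value $1$ at $z=0$, this gives two-sided bounds on the transformed $\mathsf{F}$ on $[0,1)$, and multiplication by $(1+t)^{-\beta+1}(1-z)^{\gamma}$ recovers (iv).

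The principal difficulty I anticipate is securing the uniform lower bound on the transformed $\mathsf{F}$ in (iv): for large $\beta$, both $\tfrac{n}{2}-a_\beta$ and $\tfrac{n}{2}-b_\beta$ may be negative, so the defining series has coefficients of varying sign and interior zeros on $[0,1)$ must be excluded. My intended workaround is to exploit Euler's integral representation in the parameter range where it applies (giving a manifestly positive integrand), and otherwise to rely on continuity on $[0,1]$, strict positivity at both endpoints via Gauss's theorem, and the hypergeometric ODE itself to rule out interior vanishing.
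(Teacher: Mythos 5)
Your proposal is correct in substance and rests on the same underlying mathematics as the paper, namely the classical theory of the Gauss hypergeometric function; the difference is one of packaging. The paper dispenses with (iii) and (iv) by quoting the catalogued two-sided asymptotics \eqref{hypergeometric function asymptotic} for $\mathsf{F}(a,b;c;z)$ as $z\to 1^-$ (and refers to \cite[Section~4]{PT18} for details), and proves (ii) by observing that the dominant term in \eqref{Phi beta der t} is $z\,\mathsf{F}(a_\beta+1,b_\beta+1;\tfrac n2+1;z)$, whose behaviour near $z=1$ matches that of $\mathsf{F}(a_{\beta+1},b_{\beta+1};\tfrac n2;z)$ because $\tfrac n2+1-(a_\beta+b_\beta+2)=\tfrac n2-(a_{\beta+1}+b_{\beta+1})$. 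You instead reprove those asymptotics from first principles (positivity of the series coefficients, Gauss's summation theorem, Euler's transformation) and handle (ii) by termwise comparison of series; both of your termwise claims do check out, e.g.\ the contiguous relation $z\,\mathsf{F}'=a_\beta\big(\mathsf{F}(a_\beta+1,b_\beta;\tfrac n2;z)-\mathsf{F}(a_\beta,b_\beta;\tfrac n2;z)\big)$ together with $(a_\beta+j)(b_\beta+j-1)\leqslant(a_\beta+j-\tfrac12)(b_\beta+j-\tfrac12)$ (which uses $a_\beta\geqslant b_\beta$) gives $0\leqslant z\,\phi_\beta'\lesssim\phi_{\beta+1}$. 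Your approach is more self-contained; the paper's is shorter because it treats \eqref{hypergeometric function asymptotic} as a black box.

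One step of your plan for (iv) would not survive as stated: a second-order ODE does not by itself exclude interior zeros of a particular solution, so ``the hypergeometric ODE itself to rule out interior vanishing'' is not an argument, and the Euler integral representation fails precisely in the parameter range you worry about ($\tfrac n2-b_\beta<0$). But the difficulty you anticipate is illusory: Euler's identity read in the other direction gives
\begin{align*}
\mathsf{F}\big(\tfrac n2-a_\beta,\tfrac n2-b_\beta;\tfrac n2;z\big)=(1-z)^{a_\beta+b_\beta-\frac n2}\,\mathsf{F}\big(a_\beta,b_\beta;\tfrac n2;z\big),
\end{align*}
and the right-hand side is a product of two manifestly positive factors on $[0,1)$ (the series for $\mathsf{F}(a_\beta,b_\beta;\tfrac n2;\cdot)$ has positive coefficients since $a_\beta,b_\beta>0$). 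Combined with continuity up to $z=1$ and Gauss's theorem at the endpoint, this gives the two-sided bound you need, so the repair is one line.
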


\begin{proof}
Let us prove (ii). If we denote $z\doteq \frac{|x|^2}{(1+t)^2}$, then,
\begin{align}
\partial_t \Phi_{\beta,\mu,\nu^2}(t,x) & =(1+t)^{-\beta} \big[(1-\beta)\mathsf{F}(a_\beta,b_\beta; \tfrac{n}{2}; z)-2z \, \mathsf{F}'(a_\beta,b_\beta; \tfrac{n}{2}; z)\big]\notag \\
& =(1+t)^{-\beta} \big[(1-\beta)\mathsf{F}(a_\beta,b_\beta; \tfrac{n}{2}; z)-4 \tfrac{a_\beta b_\beta}{n}z \, \mathsf{F}(a_\beta +1,b_\beta +1; \tfrac{n}{2}+1; z)\big]. \label{Phi beta der t}
\end{align} Moreover, 
\begin{align*}
\Phi_{\beta+1,\mu,\nu^2}(t,x) & =(1+t)^{-\beta} \mathsf{F}(a_{\beta+1},b_{\beta+1}; \tfrac{n}{2}; z) = (1+t)^{-\beta} \mathsf{F}(a_{\beta}+\tfrac{1}{2},b_{\beta}+\tfrac{1}{2}; \tfrac{n}{2}; z).
\end{align*}
Since for real parameters $(a,b;c)$ the hypergeometric function has the following behavior for $z\in [0,1)$ 
\begin{align} \label{hypergeometric function asymptotic}
\mathsf{F}(a,b;c;z) \approx \begin{cases} 1 & \mbox{if} \ \ c>a+b, \\ -\log(1-z) & \mbox{if} \ \ c=a+b, \\ (1-z)^{c-(a+b)} & \mbox{if} \ \ c<a+b,\end{cases}
\end{align} and $\frac{n}{2}+1 -(a_\beta+b_\beta+2)=\frac{n}{2}-(a_{\beta+1}+b_{\beta+1})$, as the second term in \eqref{Phi beta der t} is the dominant one, we get immediately the desired property. By using \eqref{hypergeometric function asymptotic}, we find (iii) and (iv) as well.
\end{proof}

\begin{rem} If we consider $\beta$ such that $b_\beta = \frac{n}{2}$, i.e. $\beta=n+\frac{\sqrt{\delta}+1-\mu}{2}$, then, $a_\beta=b_\beta+\frac{\sqrt{\delta}}{2}$ and
\begin{align*}
\Phi_{\beta,\mu,\nu}(t,x)=(1+t)^{-n+\frac{\mu+1-\sqrt{\delta}}{2}} \mathsf{F}\Big(\tfrac{n+\sqrt{\delta}}{2},\tfrac{n}{2};\tfrac{n}{2}; \tfrac{|x|^2}{(1+t)^2}\Big)= V(t,x)
\end{align*} with $V$ defined by \eqref{def V}. In the previous equality, we used the relation $\mathsf{F}(\alpha,\gamma;\gamma;z)=(1-z)^{-\alpha}$.
\end{rem}

\begin{lem}\label{lemma 3.7}
Let us assume $u_0 \in H^1(\mathbb{R}^n)$ and $u_1\in L^2(\mathbb{R}^n)$ satisfying $\supp u_0,\supp u_1 \subset B_{r_0}$ for some $r_0\in (0,1)$ and 
\begin{align*}
J_{\beta,\mu,\nu^2}[u_0,u_1] \doteq \int_{\mathbb{R}^n}\Big(u_1(x)\Phi_{\beta,\mu,\nu^2}(0,x)+u_0(x)\big(\mu \Phi_{\beta,\mu,\nu^2}(0,x)-\partial_t\Phi_{\beta,\mu,\nu^2}(0,x)\big)\Big) \, dx >0,
\end{align*} where $\beta>\frac{\sqrt{\delta}+1-\mu}{2}$ is a parameter.
Let $u$ be a super-solution of \eqref{scale inv equation H}
such that $\supp u\subset Q_{r_0}.$ 

Then, for any $p>1$ and any $R\in (1,T)$ it holds  
\begin{align}
&J_{\beta,\mu,\nu^2}[u_0,u_1]\,\varepsilon+ \int_0^T \int_{\mathbb{R}^n} H(t,x) \, \psi_R(t) \, \Phi_{\beta,\mu,\nu^2}(t,x) \, dx \, dt  \notag \\ & \quad \lesssim R^{-1} \int_0^T \int_{\mathbb{R}^n}|u(t,x)| \, \Phi_{\beta+1,\mu,\nu^2}(t,x) \,  [\psi_R^*(t)]^{\frac{1}{p}} \,dx \, dt, \label{lower bound int u Phi beta+1 psi*R 1/p}
\end{align} where the multiplicative constant in \eqref{lower bound int u Phi beta+1 psi*R 1/p} is independent of $\varepsilon$ and $R$.
\end{lem}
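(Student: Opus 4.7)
The plan is to imitate the proof of Lemma \ref{lemma 3.4}, but with the self-similar solution $\Phi_{\beta,\mu,\nu^2}$ playing the role of $V$. Specifically, I would apply the identity \eqref{integral relation supersol, 2 deriv t} from Lemma \ref{lemma 3.3} with the test function
\[
\Psi(t,x) \doteq \psi_R(t)\,\Phi_{\beta,\mu,\nu^2}(t,x)\,\chi(x),
\]
where $\chi\in\mathcal{C}_0^\infty(\mathbb{R}^n)$ is a cutoff equal to $1$ on $B_{r_0+T}$. Because $\supp u\subset Q_{r_0}$, all contributions involving $\nabla\chi$ or $\Delta\chi$ drop out of the $u$-integral. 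Since $\psi_R(0)=1$ and $\chi\equiv 1$ on $\supp u_0\cup\supp u_1$, the left-hand side of \eqref{integral relation supersol, 2 deriv t} collapses exactly to $\varepsilon\, J_{\beta,\mu,\nu^2}[u_0,u_1]+\int_0^T\!\!\int_{\mathbb{R}^n} H\,\psi_R\,\Phi_{\beta,\mu,\nu^2}\,dx\,dt$, which matches the left-hand side of \eqref{lower bound int u Phi beta+1 psi*R 1/p}.

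Next, I would expand the differential operator on $\Psi$ by Leibniz. Since Lemma \ref{lemma 3.6} (i) tells us that $\Phi_{\beta,\mu,\nu^2}$ solves the adjoint equation \eqref{adjoint equation}, the terms in which all derivatives hit $\Phi_{\beta,\mu,\nu^2}$ cancel, so that modulo the vanishing $\chi$-derivative terms,
\[
\Psi_{tt}-\Delta\Psi-\tfrac{\mu}{1+t}\Psi_t+\tfrac{\mu+\nu^2}{(1+t)^2}\Psi \;=\; \psi_R''(t)\,\Phi_{\beta,\mu,\nu^2}+2\psi_R'(t)\,\partial_t\Phi_{\beta,\mu,\nu^2}-\tfrac{\mu}{1+t}\psi_R'(t)\,\Phi_{\beta,\mu,\nu^2}.
\]
Thus the right-hand side of \eqref{integral relation supersol, 2 deriv t} is an integral of $|u|$ against these three terms, all of which are supported in $t\in[R/2,R]$.

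To conclude, I would estimate each of the three terms pointwise by $R^{-1}\,[\psi_R^*]^{1/p}\,\Phi_{\beta+1,\mu,\nu^2}$. For the middle term, use $|\partial_t\Phi_{\beta,\mu,\nu^2}|\lesssim\Phi_{\beta+1,\mu,\nu^2}$ from Lemma \ref{lemma 3.6} (ii) together with \eqref{estimate psi' R} applied with $k=p'$, which gives $|\psi_R'|\lesssim R^{-1}[\psi_R^*]^{1/p}$. For the other two terms, use \eqref{estimate psi'' R} with $k=2p'$ (yielding $|\psi_R''|\lesssim R^{-2}[\psi_R^*]^{1/p}$) and the bound $\tfrac{1}{1+t}\lesssim R^{-1}$ on $\supp\psi_R'$; it then remains to absorb a factor of $R$ by comparing $\Phi_{\beta,\mu,\nu^2}$ to $\Phi_{\beta+1,\mu,\nu^2}$. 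Since on the relevant set $(1+t)\sim R$, this reduces to checking the pointwise inequality $\phi_\beta(z)\lesssim \phi_{\beta+1}(z)$ for $z\in[0,1)$, which follows from the power-series definition (all Pochhammer coefficients are increased when $a_\beta,b_\beta$ are shifted up by $1/2$, at least once $\beta$ is large enough that $a_\beta,b_\beta\geq 0$) or, equivalently, from the sharper asymptotics in Lemma \ref{lemma 3.6} (iii)--(iv). Summing the three contributions yields \eqref{lower bound int u Phi beta+1 psi*R 1/p}.

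The main technical obstacle I anticipate is precisely this last comparison between $\Phi_{\beta,\mu,\nu^2}$ and $\Phi_{\beta+1,\mu,\nu^2}$: if $\beta$ and $\beta+1$ fall on opposite sides of the threshold $\tfrac{n+1-\mu}{2}$ appearing in Lemma \ref{lemma 3.6} (iii)--(iv), the two functions have qualitatively different behavior near $|x|=1+t$, so the comparison must be verified uniformly in $z\in[0,1)$ rather than by a simple asymptotic matching. Everything else is routine bookkeeping parallel to the proof of Lemma \ref{lemma 3.4}.
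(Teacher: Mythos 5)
Your proposal is correct and follows essentially the same route as the paper: the same test function $\psi_R(t)\,\Phi_{\beta,\mu,\nu^2}(t,x)\,\chi(x)$ in \eqref{integral relation supersol, 2 deriv t}, the same cancellation via the adjoint equation, the same use of \eqref{estimate psi' R}--\eqref{estimate psi'' R} with $k=p'$ and $k=2p'$ together with Lemma \ref{lemma 3.6} (ii), and the same final comparison $\Phi_{\beta,\mu,\nu^2}\lesssim R\,\Phi_{\beta+1,\mu,\nu^2}$ on $\{1+t\approx R\}$. The obstacle you flag at the end is not actually an issue: the standing hypothesis $\beta>\tfrac{\sqrt{\delta}+1-\mu}{2}$ already forces $b_\beta>0$ (hence $a_\beta>0$), so the term-by-term Pochhammer comparison $\mathsf{F}(a_\beta,b_\beta;\tfrac n2;z)\leqslant\mathsf{F}(a_\beta+\tfrac12,b_\beta+\tfrac12;\tfrac n2;z)$ holds uniformly on $z\in[0,1)$ with no extra restriction, which is precisely the step the paper uses (without spelling out the justification you supply).
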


\begin{proof}
Let us consider $\Psi(t,x)=\psi_R(t)\Phi_{\beta,\mu,\nu^2}(t,x)\chi (x)$, where $\chi\in \mathcal{C}^\infty_0(\mathbb{R}^n)$ satisfies $\chi=1$ on $B_{r_0+T}$. Applying \eqref{integral relation supersol, 2 deriv t} to the test function $\Psi$, we get
\begin{align}
 \varepsilon J_{\beta,\mu,\nu^2}&[u_0,u_1]  + \int_0^T\!\int_{\mathbb{R}^n}  H(t,x) \, \psi_R(t)\, \Phi_{\beta,\mu,\nu^2}(t,x)\, dx \, dt \notag \\  &  \leqslant \int_0^T \int_{\mathbb{R}^n} u(t,x)\Big( \partial_t^2 \psi_R(t)\Phi_{\beta,\mu,\nu^2}(t,x)+2\partial_t \psi_R(t)\partial_t\Phi_{\beta,\mu,\nu^2}(t,x)-\tfrac{\mu}{1+t}\partial_t\psi_R(t)\Phi_{\beta,\mu,\nu^2}(t,x) \Big)\, dx \, dt \notag \\
& \qquad + \int_0^T \int_{\mathbb{R}^n} u(t,x)\psi_R(t)\Big(\partial_t^2-\Delta-\tfrac{\mu}{1+t} \partial_t  +\tfrac{\mu+\nu^2}{(1+t)^2} \Big) \Phi_{\beta,\mu,\nu^2}(t,x)\, dx \, dt \notag \\
& \lesssim \int_0^T \int_{\mathbb{R}^n} |u(t,x)|\Big( R^{-2}|\Phi_{\beta,\mu,\nu^2}(t,x)|+R^{-1}|\partial_t\Phi_{\beta,\mu,\nu^2}(t,x)|\Big)[\psi^*_R(t)]^{\frac{1}{p}}\, dx \, dt,\label{intermediate estimate int J beta}
\end{align} where in last inequality we used the fact that $\Phi_{\beta,\mu,\nu^2}$ solves \eqref{adjoint equation} and \eqref{estimate psi' R}, \eqref{estimate psi'' R}.
We note that for $t\in[\frac{R}{2},R]$ and $|x|\leqslant r_0+t$, it holds
\begin{align*}
\Phi_{\beta,\mu,\nu^2}(t,x) & =(1+t)^{-\beta +1} \mathsf{F}\Big(\tfrac{\beta}{2}+\tfrac{\mu-1+\sqrt{\delta}}{4},\tfrac{\beta}{2}+\tfrac{\mu-1-\sqrt{\delta}}{4}; \tfrac{n}{2};\tfrac{|x|^2}{(1+t)^2}\Big) \\
& \lesssim R (1+t)^{-\beta } \mathsf{F}\Big(\tfrac{\beta}{2}+\tfrac{\mu-1+\sqrt{\delta}}{4},\tfrac{\beta}{2}+\tfrac{\mu-1-\sqrt{\delta}}{4}; \tfrac{n}{2};\tfrac{|x|^2}{(1+t)^2}\Big) \\
& \lesssim R (1+t)^{-\beta } \mathsf{F}\Big(\tfrac{\beta+1}{2}+\tfrac{\mu-1+\sqrt{\delta}}{4},\tfrac{\beta+1}{2}+\tfrac{\mu-1-\sqrt{\delta}}{4}; \tfrac{n}{2};\tfrac{|x|^2}{(1+t)^2}\Big) = R \, \Phi_{\beta+1,\mu,\nu^2}(t,x) 
\end{align*} and, then, combining the previous estimate with Lemma \ref{lemma 3.6} (ii), we get $$ R^{-2}|\Phi_{\beta,\mu,\nu^2}(t,x)|+R^{-1}|\partial_t\Phi_{\beta,\mu,\nu^2}(t,x)|\lesssim R^{-1} \Phi_{\beta+1,\mu,\nu^2}(t,x).$$ Thus, if we use the last estimate in the right hand side of \eqref{intermediate estimate int J beta} we get \eqref{lower bound int u Phi beta+1 psi*R 1/p}. This completes the proof.
\end{proof}

\begin{rem} Let us rewrite the function that multiplies $u_0$ in $J_{\beta,\mu,\nu^2}[u_0,u_1]$ in a more explicit way:
\begin{align*}
\mu \Phi_{\beta,\mu,\nu^2}(0,x)-\partial_t\Phi_{\beta,\mu,\nu^2}(0,x)= (\beta+\mu-1)\mathsf{F}\big(a_\beta,b_\beta; \tfrac{n}{2}; |x|^2\big)+\frac{4 a_\beta b_\beta}{n}\mathsf{F}\big(a_{\beta+1},b_{\beta+1}; \tfrac{n+1}{2}; |x|^2\big).
\end{align*} Then, if $\beta\geqslant 1-\mu$, in order to get a strictly positive $J_{\beta,\mu,\nu^2}[u_0,u_1]$, it is sufficient to consider nonnegative and nontrivial $u_0,u_1$. Since in our treatment  either $\beta\in \big(\tfrac{\sqrt{\delta}+1-\mu}{2},\tfrac{n-\mu+1}{2}\big)$ or  $\beta\geqslant \tfrac{n-\mu+1}{2}$, we may assume without loss of regularity that $\beta\geqslant 1-\mu$ thanks to $1-\mu< \frac{n-\mu+1}{2}$.
\end{rem}

\begin{lem}\label{lemma 3.8}
Let $\beta>\frac{\sqrt{\delta}+1-\mu}{2}$ be a real number such that $\beta\neq \frac{n-\mu+1}{2}$. Then, the following estimate holds for $R\geqslant R_0>0$:
\begin{align*}
\int_{\frac{R}{2}}^R \int_{B_{r_0+t}} (\Phi_{\beta,\mu,\nu^2}(t,x))^{p'} \, dx \, dt \lesssim \begin{cases} R^{n+1+(1-\beta)p'} & \mbox{if} \ \ \beta< \frac{n-\mu+1}{2}+1-\frac{1}{p}, \\ R^{-\frac{n-\mu-1}{2}p'+n} \log R & \mbox{if} \ \ \beta= \frac{n-\mu+1}{2}+1-\frac{1}{p}, \\ R^{-\frac{n-\mu-1}{2}p'+n}  & \mbox{if} \ \ \beta> \frac{n-\mu+1}{2}+1-\frac{1}{p} .\end{cases}
\end{align*}
\end{lem}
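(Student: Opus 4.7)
The proof plan is to invoke the pointwise asymptotics for $\Phi_{\beta,\mu,\nu^2}$ provided by Lemma \ref{lemma 3.6}, treating separately the two regimes $\beta\in\bigl(\tfrac{\sqrt{\delta}+1-\mu}{2},\tfrac{n+1-\mu}{2}\bigr)$ and $\beta>\tfrac{n+1-\mu}{2}$ (the borderline value $\beta=\tfrac{n+1-\mu}{2}$ being excluded by hypothesis).

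In the first range, Lemma \ref{lemma 3.6}(iii) yields $\Phi_{\beta,\mu,\nu^2}(t,x)\lesssim (1+t)^{1-\beta}$ uniformly on $Q$. Since $|B_{r_0+t}|\lesssim (1+t)^n$, the double integral is immediately bounded by $R^{(1-\beta)p'+n+1}$. As $\beta<\tfrac{n-\mu+1}{2}<\tfrac{n-\mu+1}{2}+1-\tfrac{1}{p}$ automatically in this range, we recover exactly the first line of the claim.

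In the second range, I would invoke Lemma \ref{lemma 3.6}(iv) and perform the change of variables $y=x/(1+t)$ with Jacobian $(1+t)^n$ to reduce the spatial integral to
\begin{equation*}
\int_{R/2}^{R}(1+t)^{(1-\beta)p'+n}\int_{|y|\leqslant (r_0+t)/(1+t)}\bigl(1-|y|^2\bigr)^{\gamma}\,dy\,dt,
\end{equation*}
where $\gamma\doteq \bigl(\tfrac{n-\mu+1}{2}-\beta\bigr)p'<0$. Switching to polar coordinates and using that, by $r_0<1$, the radial cut-off is $1-(1-r_0)/(1+t)$, the inner integral behaves (up to bounded factors coming from $\rho^{n-1}(1+\rho)^{\gamma}$) like $\int_{0}^{1-(1-r_0)/(1+t)}(1-\rho)^{\gamma}\,d\rho$. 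The three cases of the claim correspond precisely to the sign of $\gamma+1=\bigl(\tfrac{n-\mu+1}{2}-\beta\bigr)p'+1$: when $\gamma>-1$, i.e.\ $\beta<\tfrac{n-\mu+1}{2}+1-\tfrac{1}{p}$, the inner integral is $O(1)$ and we get $R^{(1-\beta)p'+n+1}$; when $\gamma=-1$, it is $\sim \log(1+t)\sim \log R$ for $t\in[R/2,R]$; and when $\gamma<-1$, it is $\sim (1+t)^{-(\gamma+1)}$. In the latter two cases, substituting back and simplifying the exponent using $(1-\beta)p'+n-(\gamma+1)=-\tfrac{n-\mu-1}{2}p'+n-1$, then integrating over $[R/2,R]$, produces respectively $R^{-\frac{n-\mu-1}{2}p'+n}\log R$ and $R^{-\frac{n-\mu-1}{2}p'+n}$.

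The only delicate point, besides the bookkeeping, is the borderline subcase $\gamma=-1$: one has to check that the logarithmic divergence from the $\rho$-integral, combined with the $t$-integration over $[R/2,R]$, produces a single $\log R$ rather than a higher power, which reduces to the observation that $(1-r_0)/(1+t)\approx 1/R$ uniformly on the time interval $[R/2,R]$ and that $1-r_0$ is a positive constant independent of $R$. Once this is verified, the remaining algebraic identities reduce to simplifying the exponent of $(1+t)$, which is routine, and the proof is complete.
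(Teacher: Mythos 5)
Your proposal is correct and follows essentially the same route as the paper: split on the two regimes of Lemma \ref{lemma 3.6} (iii) and (iv), bound the first by the volume of $B_{r_0+t}$, and reduce the second to a one-dimensional radial integral whose three behaviors (bounded, logarithmic, power) are governed by the sign of $\bigl(\tfrac{n-\mu+1}{2}-\beta\bigr)p'+1$. The only difference is cosmetic — you rescale $y=x/(1+t)$ where the paper writes $1-\tfrac{r}{1+t}=\tfrac{1+t-r}{1+t}$ and integrates in $r$ directly — and your exponent bookkeeping, including the single $\log R$ in the borderline case, checks out.
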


\begin{proof}
Let us begin with the case $\beta< \frac{n-\mu+1}{2}$. Using Lemma \ref{lemma 3.6} (iii), we get
\begin{align*}
\int_{\frac{R}{2}}^R \int_{B_{r_0+t}} (\Phi_{\beta,\mu,\nu^2}(t,x))^{p'} \, dx \, dt & \approx \int_{\frac{R}{2}}^R \int_{B_{r_0+t}} (1+t)^{(-\beta+1)p'} \, dx \, dt \\ & \approx \int_{\frac{R}{2}}^R (1+t)^{(-\beta+1)p'} (r_0+t)^n \, dt \lesssim R^{n+1+(-\beta+1)p'}.
\end{align*} When $\beta> \frac{n-\mu+1}{2}$,  from Lemma \eqref{lemma 3.6} (iv) it follows
\begin{align*}
\int_{\frac{R}{2}}^R \int_{B_{r_0+t}} (\Phi_{\beta,\mu,\nu^2}(t,x))^{p'}  \, dx \, dt 
 & \lesssim \int_{\frac{R}{2}}^R  (1+t)^{(-\beta+1)p'}  \int_0^{r_0+t}\Big(1-\tfrac{r}{(1+t)}\Big)^{\frac{n-\mu+1}{2}p'-\beta p'} r^{n-1}\, dr \, dt \\
  & \lesssim \int_{\frac{R}{2}}^R  (1+t)^{-\frac{n-\mu-1}{2}p'} (r_0+t)^{n-1} \int_0^{r_0+t}(1+t-r)^{\frac{n-\mu+1}{2}p'-\beta p'}\, dr \, dt \\
  & \lesssim \begin{cases}  \vphantom{\Big(} \int_{\frac{R}{2}}^R  (1+t)^{-\frac{n-\mu-1}{2}p'} (r_0+t)^{n-1}\, dt  & \mbox{if} \ \ \beta> \frac{n-\mu+1}{2}+1-\frac{1}{p},\\ \vphantom{\Big(} \int_{\frac{R}{2}}^R  (1+t)^{-\frac{n-\mu-1}{2}p'} (r_0+t)^{n-1} \log(1+t)\, dt  & \mbox{if} \ \  \beta = \frac{n-\mu+1}{2}+1-\frac{1}{p}, \\ \vphantom{\Big(} \int_{\frac{R}{2}}^R  (1+t)^{(-\beta+1) p'+1} (r_0+t)^{n-1} \, dt & \mbox{if} \ \ \beta< \frac{n-\mu+1}{2}+1-\frac{1}{p} \end{cases} \\
  & \lesssim \begin{cases} R^{-\frac{n-\mu-1}{2}p'+n}  & \mbox{if} \ \ \beta> \frac{n-\mu+1}{2}+1-\frac{1}{p},\\ R^{-\frac{n-\mu-1}{2}p'+n} \log R & \mbox{if} \ \  \beta = \frac{n-\mu+1}{2}+1-\frac{1}{p}, \\ R^{n+1+(-\beta+1)p'} & \mbox{if} \ \ \beta< \frac{n-\mu+1}{2}+1-\frac{1}{p}. \end{cases}
\end{align*} Combining the two cases, we find the desired estimate.
\end{proof}

\section{Critical case: Proof of Theorem \ref{Thm critical case}} \label{Section critical case}

This section is organized as follows: firstly, we recall some technical lemmas from \cite{IS18,ISW18}; then, in the last two subsections we prove the blow-up results and the corresponding upper bounds for the lifespan in the critical case $p=p_0(n+\mu)$ for \eqref{scale inv eq} and on the critical curve $\max\{F(n+\mu_1,p,q);F(n+\mu_2,q,p)\}=0$ for \eqref{weakly coupled system}, respectively. 

\subsection{Lemmas on the blow-up dynamic in critical cases}

The results stated in this section are already know in the literature (see \cite{IS18,ISW18}). Nonetheless, for the ease of the reader they will be recalled.  The upcoming lemmas will play a fundamental role in determining the upper bound lifespan estimate of exponential type, whenever we are in a critical case.

\begin{defn} Let $w\in L^1_{\loc}([0,T), L^1(\mathbb{R}^n))$ be a nonnegative function. We set
\begin{align*}
Y[w](R)\doteq \int_0^R \bigg(\int_0^T \int_{\mathbb{R}^n} w(t,x) \,\psi_\sigma^*(t) \, dx \, dt \bigg) \sigma^{-1} \, d\sigma, \qquad \mbox{for any} \ \ R\in (0,T).
\end{align*} 
\end{defn}

The functional $Y[w]$ satisfies the properties stated in the next lemma.

\begin{lem}\label{lemma 3.9} Let $w\in L^1_{\loc}([0,T), L^1(\mathbb{R}^n))$ be a nonnegative function. Then, $Y[w]\in \mathcal{C}^1((0,T))$ and for any $R\in (0,T)$
\begin{align*}
\frac{d}{dR}Y[w](R) & = R^{-1} \int_0^T \int_{\mathbb{R}^n} w(t,x) \,\psi_R^*(t) \, dx \, dt,  \\
Y[w](R) & \leqslant \int_0^T \int_{\mathbb{R}^n} w(t,x) \,\psi_R(t) \, dx \, dt.
\end{align*}
\end{lem}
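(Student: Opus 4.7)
My plan is to treat the two assertions separately: the $\mathcal{C}^1$ regularity of $Y[w]$ will follow from the fundamental theorem of calculus applied to its defining $\sigma$-integral, while the pointwise upper bound will come from Tonelli's theorem followed by a short case analysis on an inner one-dimensional integral.

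For the first claim, let $g(\sigma)\doteq\sigma^{-1}\int_0^T\!\int_{\mathbb{R}^n}w(t,x)\,\psi_\sigma^*(t)\,dx\,dt$, so that $Y[w](R)=\int_0^R g(\sigma)\,d\sigma$. Because $\psi_\sigma^*$ is supported in $[\sigma/2,\sigma]$ and bounded by $1$, the function $g$ is finite on $(0,T)$ in view of the assumption $w\in L^1_{\loc}([0,T),L^1(\mathbb{R}^n))$. I would verify continuity of $g$ at an arbitrary $\sigma_0\in(0,T)$ by the dominated convergence theorem: for $\sigma$ varying in a compact neighbourhood of $\sigma_0$, the integrand is dominated by $w(t,x)\,\mathbf{1}_{[0,T_1]}(t)$ for some $T_1<T$, while $\sigma\mapsto\psi^*(t/\sigma)$ is continuous at $\sigma_0$ for every $t\ne\sigma_0/2$, the exceptional set being Lebesgue null. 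The FTC then gives $Y[w]\in\mathcal{C}^1((0,T))$ together with the claimed formula for $Y[w]'(R)$.

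For the second claim, I would first invoke Tonelli (valid because $w\ge 0$) to write
\begin{equation*}
Y[w](R)=\int_0^T\!\int_{\mathbb{R}^n}w(t,x)\,I_R(t)\,dx\,dt,\qquad I_R(t)\doteq\int_0^R\psi^*(t/\sigma)\,\frac{d\sigma}{\sigma},
\end{equation*}
reducing everything to the pointwise estimate $I_R(t)\le\psi_R(t)$. The substitution $u=t/\sigma$ converts $I_R(t)$ into $\int_{t/R}^{\infty}\psi^*(u)\,du/u$, which by $\supp\psi^*\subset[\tfrac12,1)$ collapses to $\int_{\max(t/R,1/2)}^{1}\psi(u)\,du/u$ (and vanishes when $t/R\ge 1$).

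The verification of $I_R(t)\le\psi_R(t)$ then splits into three short cases. If $R\le t$ both sides vanish. If $t<R\le 2t$, the monotonicity of $\psi$ yields $I_R(t)\le\psi(t/R)\log(R/t)\le\psi_R(t)\log 2$. If $R>2t$, then $t/R<\tfrac12$, so $\psi_R(t)=\psi(t/R)=1$ while $I_R(t)\le\int_{1/2}^{1}du/u=\log 2$. In all cases the estimate closes because $\log 2<1$, which is precisely the numerical slack generated by the distinction between $\psi_R^*$ appearing in the definition of $Y[w]$ and $\psi_R$ appearing on the right-hand side of the lemma. I expect the main obstacle to be notational rather than analytical: one must keep careful track of supports so that the extra logarithmic factor produced by the substitution is absorbed by $\log 2<1$, rather than produce a spurious $\log R$ loss.
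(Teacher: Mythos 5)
Your proof is correct, and it follows essentially the same route as the source the paper relies on: the paper itself omits the argument and cites \cite[Proposition 2.1]{IS18}, whose proof is exactly your combination of the fundamental theorem of calculus for the $\sigma$-integral and the Tonelli/change-of-variables computation reducing matters to $\int_{\max(t/R,1/2)}^{1}\psi(u)\,u^{-1}\,du\leqslant\log 2\leqslant 1$ together with the support comparison between $\psi_R^*$ and $\psi_R$. Nothing further is needed.
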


For the proof of the above lemma, one can see \cite[Proposition 2.1]{IS18}.

\begin{lem}\label{lemma 3.10} Let $y\in \mathcal{C}^1([R_0,T))$ be a nonnegative function, where $2<R_0<T$. Moreover, there exist $\theta, K_1,K_2>0$ and $p_1,p_2>1$ such that
\begin{align*}
\begin{cases}
 R \,y'(R) \geqslant K_1 \theta  & \mbox{for} \ \ R\in (R_0,T), \\
R \,(\log R)^{p_2-1}  y'(R) \geqslant K_2 (y(R))^{p_1}  & \mbox{for} \ \ R\in (R_0,T).
\end{cases}
\end{align*} If $p_2<p_1+1$, then, there exist $\theta_0,K>0$ such that
\begin{align*}
T\leqslant \exp\big(K \theta^{-\frac{p_1-1}{p_1-p_2+1}}\big) \qquad \mbox{for any} \ \ \theta\in (0,\theta_0).
\end{align*}
\end{lem}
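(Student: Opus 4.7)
The plan is to remove the logarithmic weight by a change of variables and then run a standard bootstrap iteration of Kato type, using the first hypothesis only to start the induction and the second to iterate. Set $t=\log R$, $z(t)=y(e^t)$, $t_0=\log R_0$, $T^{*}=\log T$. Since $Ry'(R)=z'(t)$, the two hypotheses become
\begin{align*}
z'(t) &\geqslant K_1\theta, \\
t^{p_2-1}z'(t) &\geqslant K_2\,z(t)^{p_1}, \qquad t\in(t_0,T^{*}).
\end{align*}
The goal becomes $T^{*}\leqslant K\theta^{-(p_1-1)/(p_1-p_2+1)}$. Integrating the first inequality yields $z(t)\geqslant K_1\theta(t-t_0)$; restricting to $t\geqslant L:=2t_0$ I get the base estimate $z(t)\geqslant C_0\,t^{a_0}$ with $C_0=K_1\theta/2$ and $a_0=1$.

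The inductive step is: assuming $z(t)\geqslant C_j t^{a_j}$ on $[L,T^{*})$, plug into the second inequality to obtain $z'(t)\geqslant K_2 C_j^{p_1}\,t^{a_j p_1+1-p_2}$, integrate from $L$ to $t$, and absorb the boundary contribution into the leading term. This produces $z(t)\geqslant C_{j+1}t^{a_{j+1}}$ on a domain $[L',T^{*})$, with the recurrences
\begin{equation*}
a_{j+1}=p_1 a_j + (2-p_2), \qquad C_{j+1}=\frac{K_2\,C_j^{p_1}}{2\,a_{j+1}}.
\end{equation*}
The linear recurrence solves explicitly as $a_j=\kappa\, p_1^{j}-(\kappa-1)$ with $\kappa=(p_1+1-p_2)/(p_1-1)$; the hypothesis $p_2<p_1+1$ is exactly what makes $\kappa>0$, so $a_j$ grows like $p_1^{j}$. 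Taking logarithms in the recurrence for $C_j$ and noting $\log a_{j+1}=O(j)$, a geometric-series argument yields $\log C_j\geqslant p_1^{j}(\log C_0-M)$ for $j$ sufficiently large, where $M$ depends on $p_1,p_2,K_2$ but not on $j$ or $\theta$.

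Combining, for $t\in[L',T^{*})$,
\begin{equation*}
\log z(t)\geqslant p_1^{j}\bigl[\log C_0-M+\kappa\log t\bigr]-(\kappa-1)\log t.
\end{equation*}
If $\log t>(M-\log C_0)/\kappa$, the bracket is strictly positive, and letting $j\to\infty$ forces $z(t)=+\infty$, contradicting $z\in\mathcal{C}^{1}([L',T^{*}))$. Hence $T^{*}\leqslant\exp((M-\log C_0)/\kappa)$, and since $\log C_0=\log(K_1\theta/2)$ this gives, for all sufficiently small $\theta$,
\begin{equation*}
T=e^{T^{*}}\leqslant \exp\bigl(K\theta^{-1/\kappa}\bigr)=\exp\bigl(K\theta^{-(p_1-1)/(p_1-p_2+1)}\bigr),
\end{equation*}
as required.

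The delicate point I anticipate is in the inductive step, where after integration one must absorb the boundary term $L^{a_j p_1+2-p_2}$ at $t=L$ into the leading term $t^{a_j p_1+2-p_2}$ uniformly in $j$. The absorption threshold is of the form $L'=2^{1/(a_j p_1+2-p_2)}L$, which tends to $L$ because $a_j p_1+2-p_2\to\infty$; hence a single $L'$ indeed works for every large enough $j$. Apart from this bookkeeping, the rest is routine.
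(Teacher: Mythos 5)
Your proof is correct, but note that the paper does not actually prove this lemma: it is quoted from Ikeda--Sobajima--Wakasa and the proof is delegated to \cite{ISW18}. The cited argument is much shorter and avoids iteration entirely. In your variables $t=\log R$, $z(t)=y(e^t)$, it runs: integrate the first inequality once to get $z(t_1)\gtrsim\theta\,t_1$ for $t_1\geqslant 2t_0$; then divide the second inequality by $z^{p_1}$ and integrate the Riccati-type inequality $z'z^{-p_1}\geqslant K_2\,s^{1-p_2}$ over $[t_1,t_2]$ with $t_1=\tfrac12 t_2$, which (using $p_2>1$, so $s^{1-p_2}\geqslant t_2^{1-p_2}$ on that interval) gives
\begin{equation*}
\frac{1}{(p_1-1)\,z(t_1)^{p_1-1}}\;\geqslant\;K_2\int_{t_1}^{t_2}s^{1-p_2}\,ds\;\gtrsim\;t_2^{\,2-p_2},
\end{equation*}
hence $(\theta t_2)^{1-p_1}\gtrsim t_2^{\,2-p_2}$, i.e.\ $t_2^{\,p_1+1-p_2}\lesssim\theta^{1-p_1}$, and letting $t_2\to\log T$ yields the claim. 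Your Kato-type slicing iteration reaches the same conclusion with considerably more machinery; what it buys is robustness (the scheme survives weaker starting data and does not need the explicit antiderivative of $z'z^{-p_1}$), at the cost of the bookkeeping you flag. On that point, your formulation is slightly off: the left endpoint is multiplied by $2^{1/a_{j+1}}$ at \emph{every} step, so the threshold does not ``tend to $L$'' relative to a fixed $L$; the correct observation is that $a_j\approx\kappa p_1^{j}$ gives $\sum_j a_j^{-1}<\infty$, hence the endpoints $\ell_j=\ell_0\prod_{k\leqslant j}2^{1/a_k}$ increase to a finite limit $\ell_\infty$ independent of $j$ and $\theta$, and all estimates hold simultaneously on $[\ell_\infty,\log T)$. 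With that repair, together with the easy checks that $a_j\geqslant 1$ for all $j$ (so the integrated exponent $a_jp_1+2-p_2=a_{j+1}$ is positive) and that your constant $M$ does not depend on $\theta$, the argument is complete and yields the stated exponent, since $1/\kappa=(p_1-1)/(p_1-p_2+1)$.
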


See \cite[Lemma 3.10]{ISW18} for the proof of Lemma \ref{lemma 3.10}.

\subsection{Critical case for the single semilinear equation}

In this section we derive upper bound estimates for the lifespan of super-solutions of the semilinear wave equation with scale-invariant damping and mass in the critical case. Even though the result has been already proved for solutions in \cite[Theorem 1.3]{PT18}, we need to use this generalization to super-solutions in Section \ref{Subsection critical weak coupled system}.

Let us introduce the notion of super solutions for the semilinear model.

\begin{defn}\label{def 4.1} Let $c,\varepsilon$ be positive real constants. Let $(u_0,u_1)\in H^1(\mathbb{R}^n)\times L^2(\mathbb{R}^n)$ be compactly supported and $p>1$. We say that $u\in \mathcal{C}([0,T),H^1(\mathbb{R}^n))\cap \mathcal{C}^1([0,T),L^2(\mathbb{R}^n))\cap L^p_{\loc}([0,T)\times \mathbb{R}^n)$ is a super-solution on $[0,T)$ of
\begin{align}\label{scale inv equation semi c}
\begin{cases}
u_{tt}-\Delta u +\frac{\mu}{1+t}u_t +\frac{\nu^2}{(1+t)^2}u = c|u|^p,  & x\in \mathbb{R}^n, \ t\in (0,T),  \\
u(0,x)= \varepsilon u_0(x), & x\in \mathbb{R}^n,  \\
u_t(0,x)= \varepsilon u_1(x),& x\in \mathbb{R}^n, 
\end{cases}
\end{align}if
$u(0,x)=\varepsilon u_0(x)$ in $H^1(\mathbb{R}^n)$ 
and 
\begin{align}
&\varepsilon \int_{\mathbb{R}^n} u_1(x) \Psi(0,x) \, dx+\int_0^T \int_{\mathbb{R}^n} c\,  |u(t,x)|^p \Psi(t,x) \, dx \, dt \leqslant \int_0^T \int_{\mathbb{R}^n} -u_t(t,x) \Psi_t(t,x) \, dx \, dt \notag \\ & \quad + \int_0^T \int_{\mathbb{R}^n} \Big(\nabla u(t,x) \cdot \nabla \Psi(t,x)+\frac{\mu}{1+t}u_t(t,x)\Psi(t,x) +\frac{\nu^2}{(1+t)^2}u(t,x)\Psi(t,x) \Big)\, dx \, dt \label{integral relation supersol semi eq c}
\end{align} for any nonnegative test function $\Psi\in \mathcal{C}_0^\infty([0,T)\times \mathbb{R}^n)$.
\end{defn}

\begin{prop}\label{Prop in the crit case semi eq}
 Let $u_0\in H^1(\mathbb{R}^n)$ and $ u_1\in L^2(\mathbb{R}^n)$ be nonnegative and compactly supported functions such that $\supp u_0,\supp u_1 \subset B_{r_0}$ with $r_0\in (0,1)$.
Let $p=p_0(n+\mu)$ and let $u$ be a super-solution of \eqref{scale inv equation semi c} on $[0,T(\varepsilon))$ such that $\supp u\subset Q_{r_0}$. Then, there exist two positive and independent of $\varepsilon$ constants $\varepsilon_0,C$ such that
\begin{align*}
T(\varepsilon)\leqslant \exp(C \varepsilon^{-p(p-1)}) \qquad \mbox{for any} \ \ \varepsilon \in (0,\varepsilon_0).
\end{align*} 
\end{prop}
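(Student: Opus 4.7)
The plan is to adapt the Ikeda--Sobajima--Wakasa test-function scheme of Section \ref{Section critical case}, with the self-similar family $\{\Phi_{\beta,\mu,\nu^2}\}_{\beta}$ and the averaged functional $Y[\cdot]$. Since $c|u|^p\geqslant 0$, any super-solution of \eqref{scale inv equation semi c} in the sense of Definition \ref{def 4.1} is automatically a super-solution of the homogeneous scale-invariant equation \eqref{scale inv equation homogeneous case} in the sense of Definition \ref{def 3.1}, so Lemma \ref{lemma 3.4} is available. The objective is to verify the hypotheses of Lemma \ref{lemma 3.10} with $p_1=p_2=p$ and $\theta\approx\varepsilon^p$: the exponent $(p_1-1)/(p_1-p_2+1)=p-1$ will then yield $T(\varepsilon)\leqslant\exp(K\varepsilon^{-p(p-1)})$, as claimed.

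I would fix the parameter
\[
\beta_\star\doteq \tfrac{n-\mu+1}{2}-\tfrac{1}{p},
\]
which lies in the admissible interval $\bigl(\tfrac{\sqrt{\delta}+1-\mu}{2},\tfrac{n-\mu+1}{2}\bigr)$ thanks to the single-equation counterpart $\tfrac{1}{p}<\tfrac{n-\sqrt{\delta}}{2}$ of the restrictions \eqref{technical restrictions on p,q critical case}. This is essentially the unique choice that allows the Strauss identity $(n+\mu-1)p^2-(n+\mu+1)p-2=0$ to be used at two crucial points below: Lemma \ref{lemma 3.6}(iii) produces the two-sided bound $\Phi_{\beta_\star,\mu,\nu^2}(t,x)\approx (1+t)^{1-\beta_\star}$ on $Q_{r_0}$, whereas the shifted parameter $\beta_\star+1=\tfrac{n-\mu+1}{2}+1-\tfrac{1}{p}$ sits exactly on the borderline case of Lemma \ref{lemma 3.8}, which is the one producing a $\log R$ loss.

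Set $w\doteq c\,|u|^p\,\Phi_{\beta_\star,\mu,\nu^2}$ and $Y(R)\doteq Y[w](R)$. The first hypothesis of Lemma \ref{lemma 3.10} (linear lower bound on $RY'$) arises from Lemma \ref{lemma 3.4}: combining $\Phi_{\beta_\star,\mu,\nu^2}\gtrsim R^{1-\beta_\star}$ on $[R/2,R]\times B_{r_0+t}$ with Lemma \ref{lemma 3.9}, one finds
\[
RY'(R)=\int_0^T\!\!\int_{\mathbb{R}^n} w(t,x)\psi_R^*(t)\,dx\,dt\;\gtrsim\;R^{1-\beta_\star}\,\varepsilon^p R^{n-\frac{n+\mu-1}{2}p}\;\gtrsim\;\varepsilon^p \qquad (R\geqslant R_0),
\]
where the two $R$-exponents add up to zero by virtue of the Strauss identity. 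For the second hypothesis I would apply Lemma \ref{lemma 3.7} with $H=c|u|^p$ and $\beta=\beta_\star$ and split the resulting right-hand side by H\"older's inequality with conjugate exponents $(p,p')$, through the factorisation $|u|\Phi_{\beta_\star+1,\mu,\nu^2}[\psi_R^*]^{1/p}=(c|u|^p\Phi_{\beta_\star,\mu,\nu^2}\psi_R^*)^{1/p}\cdot c^{-1/p}\Phi_{\beta_\star+1,\mu,\nu^2}\Phi_{\beta_\star,\mu,\nu^2}^{-1/p}$, to obtain
\[
\varepsilon\, J_{\beta_\star,\mu,\nu^2}[u_0,u_1]+Y(R)\;\lesssim\; R^{-1}\bigl(RY'(R)\bigr)^{1/p}\,\mathcal{B}(R)^{1/p'},
\]
with $\mathcal{B}(R)\doteq\int_{R/2}^R\!\!\int_{B_{r_0+t}}\Phi_{\beta_\star+1,\mu,\nu^2}^{p'}\,\Phi_{\beta_\star,\mu,\nu^2}^{-p'/p}\,dx\,dt$.

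The decisive computation is the estimate of $\mathcal{B}(R)$. Inserting the asymptotics of Lemma \ref{lemma 3.6}(iii)--(iv) one obtains an integrand $\approx (1+t)^{-\beta_\star-1/(p-1)}\bigl(1-\tfrac{|x|^2}{(1+t)^2}\bigr)^{-1}$; the $(1-z)^{-1}$ singularity contributes an extra factor $\log(1+t)$ after integration in $x$ over $|x|\leqslant r_0+t$, and the subsequent $t$-integration on $[R/2,R]$ yields $\mathcal{B}(R)\lesssim R^{n+1-\beta_\star-1/(p-1)}\log R$. A short manipulation shows that the Strauss identity is equivalent to $\tfrac{n+\mu+1}{2}+\tfrac{1}{p}=\tfrac{p+1}{p-1}$, i.e.\ to $n+1-\beta_\star-\tfrac{1}{p-1}=p'$, so $\mathcal{B}(R)\lesssim R^{p'}\log R$. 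Substituting back and discarding the nonnegative term $\varepsilon\, J_{\beta_\star,\mu,\nu^2}[u_0,u_1]$ on the left, one gets $Y(R)\lesssim (RY'(R))^{1/p}(\log R)^{1/p'}$, equivalently $R(\log R)^{p-1}Y'(R)\gtrsim Y(R)^p$. Both hypotheses of Lemma \ref{lemma 3.10} are now met with $p_1=p_2=p$ (trivially $p_2<p_1+1$) and $\theta\approx\varepsilon^p$, yielding the announced bound $T(\varepsilon)\leqslant\exp(K\varepsilon^{-p(p-1)})$. The main technical obstacle is precisely this $\mathcal{B}(R)$ computation: the $\log$ factor must appear with exponent $1/p'$ and the $R$-exponent must equal $p'$, and both of these rigid matches are forced by the Strauss identity together with the specific choice of $\beta_\star$; any deviation either fails to trigger the logarithmic case of Lemma \ref{lemma 3.8} or produces an $R$-exponent that does not match, spoiling the exponential lifespan estimate.
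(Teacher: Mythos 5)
Your proposal is correct and follows essentially the same route as the paper: the same parameter $\beta_\star=\beta_p=\tfrac{n-\mu+1}{2}-\tfrac1p$, the same functional $Y[\,|u|^p\Phi_{\beta_p,\mu,\nu^2}]$, the lower bound from Lemma \ref{lemma 3.4} combined with the Strauss identity, the nonlinear inequality from Lemma \ref{lemma 3.7} plus H\"older, and Lemma \ref{lemma 3.10} with $p_1=p_2=p$. The only (harmless) cosmetic difference is that you absorb $\Phi_{\beta_\star,\mu,\nu^2}^{-1/p}$ into the dual H\"older factor and evaluate $\mathcal{B}(R)\lesssim R^{p'}\log R$ directly, whereas the paper applies plain H\"older, invokes the logarithmic case of Lemma \ref{lemma 3.8} for $\int\Phi_{\beta_p+1,\mu,\nu^2}^{p'}$, and then reinserts $\Phi_{\beta_p,\mu,\nu^2}$ via its two-sided bound from Lemma \ref{lemma 3.6}(iii) — the two computations are equivalent.
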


\begin{proof}
In order to prove the proposition, we consider $Y=Y[|u|^p\Phi_{\beta_p,\mu,\nu^2}]$ with $\beta_p=\frac{n-\mu+1}{2}-\frac{1}{p}$. \newline
Since $p=p_0(n+\mu)$, then, 
\begin{align}\label{beta p condition}
-\beta_p+1+n-\tfrac{n+\mu-1}{2}p= \tfrac{1}{p}\big(1+\tfrac{n+\mu+1}{2}p-\tfrac{n+\mu-1}{2}p^2\big)=0.
\end{align} Therefore, using \eqref{lower bound int u^p psi*R}, we find
\begin{align*}
Y'(R)R & =\int_0^T\int_{\mathbb{R}^n} |u(t,x)|^p \, \Phi_{\beta_p,\mu,\nu^2}(t,x) \,\psi_R^*(t) \, dx \, dt 
\gtrsim R^{-\beta_p+1}\int_0^T\int_{\mathbb{R}^n} |u(t,x)|^p \, \psi_R^*(t) \, dx \, dt \\ & \gtrsim R^{-\beta_p+1+n-\frac{n+\mu-1}{2}p} \varepsilon^p= \varepsilon^p.
\end{align*}
Furthermore, from Lemmas \ref{lemma 3.9}, \ref{lemma 3.7} and \ref{lemma 3.8} we obtain
\begin{align*}
(Y(R))^p & \lesssim \bigg(\int_0^T\int_{\mathbb{R}^n} |u(t,x)|^p \, \Phi_{\beta_p,\mu,\nu^2}(t,x) \,\psi_R(t) \, dx \, dt \bigg)^p \\
& \lesssim R^{-p} \bigg(\int_0^T\int_{\mathbb{R}^n} |u(t,x)| \, \Phi_{\beta_p+1,\mu,\nu^2}(t,x) \,[\psi_R^*(t)]^{\frac{1}{p}} \, dx \, dt \bigg)^p \\
& \lesssim R^{-p} \bigg(\int_{\frac{R}{2}}^R \int_{B_{r_0+t}} (\Phi_{\beta_p+1,\mu,\nu^2}(t,x) )^{p'} \, dx \, dt \bigg)^{p-1} \int_0^T\int_{\mathbb{R}^n} |u(t,x)|^p \,\psi_R^*(t)\, dx \, dt  \\
& \lesssim R^{-p-\frac{n-\mu-1}{2}p+n(p-1)} (\log R)^{p-1} \int_0^T\int_{\mathbb{R}^n} |u(t,x)|^p \,\psi_R^*(t)\, dx \, dt \\
& \lesssim R^{\frac{n+\mu-1}{2}p-n} (\log R)^{p-1} R^{\beta_p-1} \int_0^T\int_{\mathbb{R}^n} |u(t,x)|^p\, \Phi_{\beta_p,\mu,\nu^2}(t,x) \,\psi_R^*(t)\, dx \, dt \\ &= (\log R)^{p-1}  \int_0^T\int_{\mathbb{R}^n} |u(t,x)|^p\, \Phi_{\beta_p,\mu,\nu^2}(t,x) \,\psi_R^*(t)\, dx \, dt = (\log R)^{p-1} Y'(R) R
\end{align*} where in the last inequality we employed \eqref{beta p condition}. Setting $\theta=\varepsilon^p$, $p_1=p_2=p$, by  Lemma \ref{lemma 3.10} it follows the upper bound for the lifespan $T(\varepsilon)\leqslant \exp\big(C \varepsilon^{- p(p-1)}\big)$ for a suitable constant $C$. 
\end{proof}

\subsection{Critical case for the weakly coupled system} \label{Subsection critical weak coupled system}

This section is devoted to the proof of Theorem \ref{Thm critical case}, but, before proving it, we will derive some estimates for the weakly coupled system \eqref{weakly coupled system} in the general case.

Let $(u,v)$ be a solution to \eqref{weakly coupled system} in the sense of Definition \ref{def energ sol intro}. 
As the nonlinear terms in \eqref{weakly coupled system} are nonnegative, in particular, $u,v$ are super-solutions of \eqref{scale inv equation homogeneous case} for $(\mu,\nu^2)=(\mu_j,\nu^2_j)$, $j=1,2$, respectively. Moreover, $\supp u, \supp v \subset Q_{r_0}$ due to the property of finite speed of propagation for hyperbolic equations.

Therefore, Lemma \ref{lemma 3.4} implies
\begin{align}
\int_0^T \int_{\mathbb{R}^n} |u(t,x)|^q \psi_R^*(t) \, dx \, dt \gtrsim \big( I_{\mu_1,\nu_1^2}[u_0,u_1]\varepsilon\big)^q R^{n-\frac{n+\mu_1-1}{2}q},  \label{(Zwei)} \\
\int_0^T \int_{\mathbb{R}^n} |v(t,x)|^p \psi_R^*(t) \, dx \, dt \gtrsim \big( I_{\mu_2,\nu_2^2}[v_0,v_1]\varepsilon\big)^p R^{n-\frac{n+\mu_2-1}{2}p}.  \label{(Eins)}
\end{align}
 
 Let us consider $\hat{\beta}\in (\frac{\sqrt{\delta_2}+1-\mu_2}{2},\frac{n-\mu_2+1}{2}-\frac{1}{p})$.  Note that the nonemptiness of the interval for $\hat{\beta}$ is guaranteed by the first condition in \eqref{technical restrictions on p,q critical case}. Using Lemma \ref{lemma 3.6} (iii), Lemma \ref{lemma 3.7} and Lemma \ref{lemma 3.8}, we get
 \begin{align*}
 \int_0^T \int_{\mathbb{R}^n} & |u(t,x)|^q \, \psi_R(t) \, dx \, dt \\  &\lesssim R^{\hat{\beta}-1} \int_0^T \int_{\mathbb{R}^n} |u(t,x)|^q \,  \Phi_{\hat{\beta},\mu_2,\nu_2^2}(t,x) \, \psi_R(t)  \, dx \, dt \\
 &\lesssim R^{\hat{\beta}-2} \int_0^T \int_{\mathbb{R}^n} |v(t,x)| \,  \Phi_{\hat{\beta}+1,\mu_2,\nu_2^2}(t,x) \, [\psi_R^*(t)]^{\frac{1}{p}}  \, dx \, dt \\
 &\lesssim R^{\hat{\beta}-2} \bigg(\int_0^T \int_{\mathbb{R}^n} \! |v(t,x)|^p \, \psi_R^*(t)  \, dx \, dt \bigg)^{\frac{1}{p}} \! \bigg(\int_{\frac{R}{2}}^R \int_{B_{r_0+t}}  \!\! \big(\Phi_{\hat{\beta}+1,\mu_2,\nu_2^2}(t,x)\big)^{p'}  \, dx \, dt \bigg)^{\frac{1}{p'}} \\
 &\lesssim
R^{-2+\frac{n+1}{p'}} \bigg(\int_0^T \int_{\mathbb{R}^n}  |v(t,x)|^p \, \psi_R^*(t)  \, dx \, dt \bigg)^{\frac{1}{p}}.
 \end{align*}
Raising to the $p$ power  both sides of the last inequality, we obtain
\begin{align} \label{(Drei)}
\bigg(\int_0^T \int_{\mathbb{R}^n} |u(t,x)|^q \, \psi_R(t) \, dx \, dt\bigg)^p \lesssim R^{-2+(n-1)(p-1)} \int_0^T \int_{\mathbb{R}^n}  |v(t,x)|^p \, \psi_R^*(t)  \, dx \, dt .
\end{align}
In a similar way, choosing $\tilde{\beta}\in (\frac{\sqrt{\delta_1}+1-\mu_1}{2},\frac{n-\mu_1+1}{2}-\frac{1}{q})$ and using Lemma \ref{lemma 3.6} (iii) to estimate $\Phi_{\tilde{\beta},\mu_1,\nu_1^2}$, one can prove
\begin{align}\label{(Vier)}
\bigg(\int_0^T \int_{\mathbb{R}^n} |v(t,x)|^p \, \psi_R(t) \, dx \, dt\bigg)^q \lesssim R^{-2+(n-1)(q-1)} \int_0^T \int_{\mathbb{R}^n}  |u(t,x)|^q \, \psi_R^*(t)  \, dx \, dt .
\end{align}

\begin{rem}\label{rem subcrit} Using \eqref{(Zwei)}, \eqref{(Eins)}, \eqref{(Drei)} and \eqref{(Vier)}, it is possible to prove the blow-up result and the corresponding upper bound for the lifespan in the subcritical case  in a simpler way than using the iteration argument. Nonetheless, additional technical restrictions on $(p,q)$, namely \eqref{technical restrictions on p,q critical case}, have to be considered, making the result obtained with the iteration argument sharper.
 
Indeed, combining \eqref{(Drei)} and \eqref{(Vier)} and the trivial inequality $\psi_R^*\leqslant \psi_R$, we find
\begin{align*}
\bigg(\int_0^T \int_{\mathbb{R}^n} |u(t,x)|^q \, \psi_R(t) \, dx \, dt\bigg)^{pq} & \lesssim R^{[-2+(n-1)(p-1)]q} \bigg(\int_0^T \int_{\mathbb{R}^n}  |v(t,x)|^p \, \psi_R^*(t)  \, dx \, dt\bigg)^q \\
&\lesssim R^{[-2+(n-1)(p-1)]q-2+(n-1)(q-1)}  \int_0^T \int_{\mathbb{R}^n}  |u(t,x)|^q \, \psi_R^*(t)  \, dx \, dt.
\end{align*} Rearranging the previous estimate, we arrive at
\begin{align*}
\bigg(\int_0^T \int_{\mathbb{R}^n} |u(t,x)|^q \, \psi_R(t) \, dx \, dt\bigg)^{pq-1} \lesssim R^{-2(q+1)+(n-1)(pq-1)},
\end{align*} that implies in turn
\begin{align*}
\int_0^T \int_{\mathbb{R}^n} |u(t,x)|^q \, \psi_R(t) \, dx \, dt\, \lesssim \, R^{-\frac{2(q+1)}{pq-1}+n-1}=R^{n-\frac{pq+2q+1}{pq-1}}.
\end{align*} Combining the previous inequality with \eqref{(Zwei)}, in the case $F(n+\mu_1,p,q)>0$ it follows
\begin{align*}
\varepsilon^q R^{n-\frac{n+\mu_1-1}{2}q} \lesssim \int_0^T \int_{\mathbb{R}^n} |u(t,x)|^q \psi_R^*(t) \, dx \, dt \lesssim R^{n-\frac{pq+2q+1}{pq-1}}. 
\end{align*} Comparing the lower bound and the upper bound for the integral in the last estimate, we obtain $$ R^{\frac{pq+2q+1}{pq-1}-\frac{n+\mu_1-1}{2}q}\lesssim \varepsilon^{-q},$$ which implies $R\lesssim \varepsilon^{-F(n+\mu_1,p,q)^{-1}} $.
Analogously,
\begin{align*}
\bigg(\int_0^T \int_{\mathbb{R}^n} |v(t,x)|^p \, \psi_R(t) \, dx \, dt\bigg)^{pq} & \lesssim R^{[-2+(n-1)(q-1)]p} \bigg(\int_0^T \int_{\mathbb{R}^n}  |u(t,x)|^q \, \psi_R^*(t)  \, dx \, dt\bigg)^p \\
&\lesssim R^{[-2+(n-1)(q-1)]p-2+(n-1)(p-1)}  \int_0^T \int_{\mathbb{R}^n}  |v(t,x)|^p \, \psi_R^*(t)  \, dx \, dt.
\end{align*} and \eqref{(Eins)} imply
\begin{align*}
\varepsilon^p R^{n-\frac{n+\mu_2-1}{2}p} \lesssim \int_0^T \int_{\mathbb{R}^n} |v(t,x)|^p \, \psi_R(t) \, dx \, dt \lesssim R^{n-\frac{pq+2p+1}{pq-1}}.
\end{align*} Proceeding as in the previous case, we have $R \lesssim \varepsilon^{-F(n+\mu_2,q,p)^{-1}}$ in the case $F(n+\mu_2,q,p)>0$. Therefore, letting $R\to T$, we obtained \eqref{lifespan upper bound estimate} provided that $p,q>1$ satisfy $\max\{F(n+\mu_1,p,q),F(n+\mu_2,q,p)\}>0$.
\end{rem}

By using the estimates that we proved in this section, we can now prove Theorem \ref{Thm critical case}. We will consider four subcases as in \eqref{lifespan upper bound estimate, critical case}.

\subsubsection{Case $F(n+\mu_1,p,q)=0>F(n+\mu_2,q,p)$} \label{Subsubsection 1}

Differently from the treatment of the subcritical case (cf. Remark \ref{rem subcrit}), in this case we study the blow-up dynamic of the function $Y=Y\big[|v|^p\Phi_{\beta_q,\mu_1,\nu_1^2}\, \big]$, where $\beta_q\doteq \frac{n-\mu_1+1}{2}-\frac{1}{q}\,$.

By \eqref{(Drei)} and \eqref{(Zwei)}, we get
\begin{align*}
  \int_0^T \int_{\mathbb{R}^n}  |v(t,x)|^p \, \psi_R^*(t)  \, dx \, dt  &\gtrsim R^{2-(n-1)(p-1)}\bigg(\int_0^T \int_{\mathbb{R}^n} |u(t,x)|^q \, \psi_R(t) \, dx \, dt\bigg)^p \\
 &\gtrsim R^{2-(n-1)(p-1)+np-\frac{n+\mu_1-1}{2}pq} \, \varepsilon^{pq} = R^{\frac{n-\mu_1-1}{2}-\frac{1}{q}} \, \varepsilon^{pq},
\end{align*} where in the last step we employed $F(n+\mu_1,p,q)=0$. Due to the definition of $\beta_q$, the last estimates implies
\begin{align}
 Y'(R)R & =\int_0^T \int_{\mathbb{R}^n}  |v(t,x)|^p \, \Phi_{\beta_q,\mu_1,\nu_1^2}(t,x) \, \psi_R^*(t)  \, dx \, dt  \gtrsim R^{-\beta_q+1}\int_0^T \int_{\mathbb{R}^n}  |v(t,x)|^p \, \psi_R^*(t)  \, dx \, dt  \gtrsim \varepsilon^{pq}. \label{Y lower bound}
\end{align}

By Lemma \ref{lemma 3.7} and Lemma \ref{lemma 3.8} in the logarithmic case, we find
\begin{align*}
 \int_0^T \int_{\mathbb{R}^n} |v(t,x)|^p\, & \Phi_{\beta_q,\mu_1,\nu_1^2}(t,x) \, \psi_R(t) \, dx \, dt \\ & \quad \lesssim R^{-1} \int_0^T \int_{\mathbb{R}^n} |u(t,x)|\, \Phi_{\beta_q+1,\mu_1,\nu_1^2}(t,x) \, [\psi_R^*(t)]^{\frac{1}{q}} \, dx \, dt \\
& \quad \lesssim R^{-1} \bigg(\int_0^T \int_{\mathbb{R}^n} |u(t,x)|^q \, \psi_R^*(t) \, dx \, dt\bigg)^{\frac{1}{q}}  \bigg(\int_{\frac{R}{2}}^R \int_{B_{r_0+t}} \big(\Phi_{\beta_q+1,\mu_1,\nu_1^2}(t,x)\Big)^{q'} \, dx \, dt\bigg)^{\frac{1}{q'}} \\
& \quad \lesssim R^{-1-\frac{n-\mu_1-1}{2}+\frac{n}{q'}} (\log R)^{\frac{1}{q'}} \bigg(\int_0^T \int_{\mathbb{R}^n} |u(t,x)|^q \, \psi_R^*(t) \, dx \, dt\bigg)^{\frac{1}{q}}  .
\end{align*}
Raising to the $pq$ power both sides of the previous inequality and using \eqref{(Drei)} and again the condition $F(n+\mu_1,p,q)=0$, we obtain
\begin{align}
\bigg( \int_0^T \int_{\mathbb{R}^n} |v(t,x)|^p\,  \Phi_{\beta_q,\mu_1,\nu_1^2}&(t,x) \, \psi_R(t) \, dx \, dt\bigg)^{pq} \notag \\ &\lesssim R^{\frac{n+\mu_1-1}{2}pq-np}(\log R)^{p(q-1)} \bigg(\int_0^T \int_{\mathbb{R}^n} |u(t,x)|^q \, \psi_R^*(t) \, dx \, dt\bigg)^p \notag \\
 & \lesssim  R^{\frac{n+\mu_1-1}{2}pq-np-2+(n-1)(p-1)}(\log R)^{p(q-1)}\int_0^T \int_{\mathbb{R}^n}  |v(t,x)|^p \, \psi_R^*(t)  \, dx \, dt  \notag \\
 & = R^{1-\beta_q}(\log R)^{p(q-1)}\int_0^T \int_{\mathbb{R}^n}  |v(t,x)|^p \, \psi_R^*(t)  \, dx \, dt \notag \\
 & \lesssim  (\log R)^{p(q-1)}\int_0^T \int_{\mathbb{R}^n}  |v(t,x)|^p \, \Phi_{\beta_q,\mu_1,\nu_1^2}(t,x)\, \psi_R^*(t)  \, dx \, dt. \label{Y log estimate}
\end{align}

Thanks to Lemma \ref{lemma 3.9}, from the last inequality we may derive the inequality
\begin{align*}
(\log R)^{p(q-1)}Y'(R)R \gtrsim (Y(R))^{pq}.
\end{align*}

Setting $\theta=\varepsilon^{pq}$, $p_1=pq$ and $p_2=1+p(q-1)$, from Lemma \ref{lemma 3.10} we have $T(\varepsilon)\leqslant \exp \big(C \varepsilon^{-q(pq-1)}\big)$ for a suitable positive constant $C$.

\begin{rem} Let us remark explicitly that from the condition $0=F(n+\mu_1,p,q)>F(n+\mu_2,q,p)$ does not follow in general, as for the weakly coupled system of free wave equations, that $p>q$, due to the presence of different shifts in the first argument of $F$.
\end{rem}

\subsubsection{Case $F(n+\mu_1,p,q)<0=F(n+\mu_2,q,p)$} \label{Subsubsection 2}

Proceeding as in the previous section but choosing now  $Y=Y\big[|u|^q \Phi_{\beta_p,\mu_2,\nu_2^2}\,\big]$, where $\beta_p\doteq \frac{n-\mu_2+1}{2}-\frac{1}{p}\,$, it is possible to prove in the case $F(n+\mu_2,q,p)=0$  the upper bound estimate $T\leqslant \exp \big(C \varepsilon^{-p(pq-1)}\big)$ for a suitable positive constant $C$.

\subsubsection{Case $F(n+\mu_1,p,q)=F(n+\mu_2,q,p)=0$}

In this case, combining the results of Sections \ref{Subsubsection 1} and \ref{Subsubsection 2}, it follows immediately the upper bound $T\leqslant \exp \big(C \varepsilon^{-\min\{p(pq-1),q(pq-1)\}}\big)$ for the lifespan. However, we can further improve this estimate. 

First, we prove that $F(n+\mu_1,p,q)=F(n+\mu_2,q,p)=0$ implies
\begin{equation}
\begin{split}
\beta_q -1&= n-\frac{n+\mu_2-1}{2}p \, , \\
\beta_p -1& = n-\frac{n+\mu_1-1}{2}q \, . 
\end{split} \label{beta p and beta q in the double critical case}
\end{equation}
Let us introduce the quantities
\begin{align*}
A\doteq n+1-\beta_p -\tfrac{n+\mu_1-1}{2}q, \quad B\doteq n+1-\beta_q -\tfrac{n+\mu_2-1}{2}p.
\end{align*} Straightforward computations show that
\begin{align*}
Ap+B= (pq-1)F(n+\mu_1,p,q)=0, \quad A+Bq= (pq-1)F(n+\mu_2,q,p)=0.
\end{align*} Hence, since $pq>1$, we have immediately $A=B=0$, that implies in turn the validity of \eqref{beta p and beta q in the double critical case}.

Let us consider $Y=Y\big[|v|^p\Phi_{\beta_q,\mu_1,\nu_1^2}\, \big]$ as in Section \ref{Subsubsection 1}. Due to the assumption $F(n+\mu_1,p,q)=0$, it holds \eqref{Y log estimate} as in Section \ref{Subsubsection 1}. The next step is to improve \eqref{Y lower bound}. Using \eqref{beta p and beta q in the double critical case}, we may rewrite \eqref{(Zwei)} and \eqref{(Eins)} as follows
\begin{align*}
\int_0^T \int_{\mathbb{R}^n} |u(t,x)|^q \psi_R^*(t) \, dx \, dt \gtrsim \varepsilon^q R^{\beta_p-1},  \quad
\int_0^T \int_{\mathbb{R}^n} |v(t,x)|^p \psi_R^*(t) \, dx \, dt \gtrsim \varepsilon^p R^{\beta_q-1}.  
\end{align*}
Consequently, 
\begin{align*}
\int_0^T \int_{\mathbb{R}^n} |u(t,x)|^q \, \Phi_{\beta_p,\mu_2,\nu_2^2}(t,x) \, \psi_R^*(t) \, dx \, dt \gtrsim \varepsilon^q ,  \quad
\int_0^T \int_{\mathbb{R}^n} |v(t,x)|^p \, \Phi_{\beta_q,\mu_1,\nu_1^2}(t,x) \, \psi_R^*(t) \, dx \, dt \gtrsim \varepsilon^p . 
\end{align*} Also, we proved
\begin{align*}
\begin{cases}
Y'(R)R \gtrsim \varepsilon^p , \\
(\log R)^{p(q-1)}Y'(R)R \gtrsim (Y(R))^{pq}.
\end{cases}
\end{align*} Applying Lemma \ref{lemma 3.10} with $\theta=\varepsilon^p$ and $p_1=pq,p_2=1+p(q-1)$, we get the estimate $T(\varepsilon)\leqslant \exp\big(C\varepsilon^{-(pq-1)}\big)$.

\subsubsection{Case $F(n+\mu_1,p,q)=F(n+\mu_2,q,p)=0$ with the same scale-invariant coefficients in the linear part}

In this last case we assume that $\mu_1=\mu_2\doteq \mu$ and $\nu_1^2=\nu_2^2\doteq \nu^2$. As we have the same shift, then,
 the condition $F(n+\mu,p,q)=F(n+\mu,q,p)=0$ implies $p=q=p_0(n+\mu)$. Therefore, $w=u+v$ is a super-solution of \eqref{scale inv equation semi c} with $c=2^{-p}$. Hence, Proposition \ref{Prop in the crit case semi eq} implies $T(\varepsilon)\leqslant \exp\big(C \varepsilon^{-p(p-1)}\big)$. This completes the proof of Theorem \ref{Thm critical case}.

\begin{rem} Let us underline that the sign assumptions on $u_0,u_1,v_0,v_1$ in Theorem \ref{Thm critical case} can be weakened. Indeed, instead of assuming the nonnegativity of these functions, it is sufficient to require that
\begin{align*}
I_{\mu_1,\nu_1^2}[u_0,u_1],  I_{\mu_2,\nu_2^2}[v_0,v_1]>0 \qquad \mbox{and} \qquad J_{\beta_q,\mu_1,\nu_1^2}[u_0,u_1],  J_{\beta_p,\mu_2,\nu_2^2}[v_0,v_1]>0,
\end{align*} as we have seen throughout the proof.
\end{rem}

\section{Final remarks}

According to the blow-up results that are proved in this paper, it is natural to conjecture that for nonnegative and small $\delta_1,\delta_2$ (for example, at least for $n\geqslant 3$ and $0\leqslant \delta_1,\delta_2\leqslant  (n-2)^2$ when \eqref{technical restrictions on p,q critical case} is always fulfilled) the critical curve for \eqref{weakly coupled system} is given by \eqref{critical exponent wave like case system, critical case}. Even though the existence of global in time small data solutions in the supercritical case is an open problem, some partial results for the single semilinear equation \eqref{scale inv eq} in the case $\delta=1$ (cf. \cite{Pal18odd,Pal18even}) suggest the likelihood and plausibility of this conjecture.

 In the case in which instead of scale-invariant damping terms (in the massless case though) we consider time-dependent coefficients for the damping terms in the scattering case (see \cite{Wirth04,Wirth06,Wirth07} for the classification of a damping term for a wave model with time-dependent coefficient), the presence of these damping terms has no influence on the critical curve.  Indeed, in a series of forthcoming papers \cite{PalTak19,PalTak19dt,PalTak19mix} several blow-up results for weakly coupled systems of damped wave equations in the scattering case with different type of nonlinearities are proved. In particular, in the case of power nonlinearities the corresponding critical curve will be exactly the same one as for the weakly coupled system of semilinear not-damped wave equations with the same nonlinearities, that is, \eqref{crit curv weakly coupled system wave}. This fact proves, once again, how the time-dependent and scale-invariant coefficients for lower order terms in a wave model make it a threshold model between ``parabolic-like''  and ``hyperbolic-like'' models. A further peculiar characteristic of scale-invariant models is that the multiplicative constants in the time-dependent coefficients (that is, $\mu_1,\mu_2,\nu_1^2,\nu_2^2$ for the weakly coupled system in \eqref{weakly coupled system}) determine the analytic expression of the critical condition for the exponents of the nonlinear terms with the presence of shifts in comparison to the corresponding critical condition for the related semilinear wave or damped wave model.

\section*{Acknowledgments}

The author is member of the Gruppo Nazionale per L'Analisi Matematica, la Probabilit\`{a} e le loro Applicazioni (GNAMPA) of the Instituto Nazionale di Alta Matematica (INdAM). The author thanks Michael Reissig (TU Freiberg) and Hiroyuki Takamura (Tohoku University) for helpful discussions on the topic.

\vspace*{0.5cm}





\end{document}